\newcommand{\norm}[1]{\left\lVert#1\right\rVert}
\newcommand{\ang}[1]{\left\langle #1 \right\rangle}
\newcommand{\floor}[1]{\left\lfloor #1 \right\rfloor}
\newcommand{\paren}[1]{\left( #1 \right)}
\newcommand{\set}[1]{\left\{ #1 \right\}}
\newcommand{\setcond}[2]{\left\{ #1 \;\middle\vert\; #2 \right\}}
\newcommand{\RR}{\mathbb{R}}
\newcommand{\ZZ}{\mathbb{Z}}
\newcommand{\cP}{\mathcal{P}}
\newcommand{\cQ}{\mathcal{Q}}
\newtheorem{thm}{Theorem}[section]
\newtheorem{lem}[thm]{Lemma}
\newtheorem{cor}[thm]{Corollary}
\newtheorem*{rmk}{Remark}
\theoremstyle{definition}
\newtheorem{defn}[thm]{Definition}
\newtheorem{example}[thm]{Example}
\DeclareMathOperator{\conv}{Conv}
\DeclareMathOperator{\Int}{Int}
\title{Fixing a hole}
\author{David Conlon\thanks{Department of Mathematics, Caltech, Pasadena, CA 91125, USA. Email: {\tt dconlon@caltech.edu}. Research supported by NSF Award DMS-2054452.} \and Jeck Lim\thanks{Department of Mathematics, Caltech, Pasadena, CA 91125, USA. Email: {\tt jlim@caltech.edu}. Research partially supported by the NUS Overseas Graduate Scholarship.}}
\date{}
\begin{document}
\maketitle

\begin{abstract}
We show that any finite $S \subset \mathbb{R}^d$ in general position has arbitrarily large supersets $T \supseteq S$ in general position with the property that $T$ contains no empty convex polygon, or hole, with $C_d$ points, where $C_d$ is an integer that depends only on the dimension $d$. This generalises results of Horton and Valtr which treat the case $S = \emptyset$. The key step in our proof, which may be of independent interest, is to show that there are arbitrarily small perturbations of the set of lattice points $[n]^d$ with no large holes.
\end{abstract}

\section{Introduction}

A set $S \subset \mathbb{R}^d$ is said to be in \emph{general position} if, for any $k < d$, there are at most $k+1$ points in any $k$-dimensional subspace, while it is in \emph{convex position} if the points of $S$ form the vertices of a convex polytope. A classic result of Erd\H{o}s and Szekeres~\cite{ESz35} then states that for any $\ell$ there exists $n$ such that any set of $n$ points in general position in the plane contains $\ell$ points in convex position. The analogous statement in higher dimensions also follows as a simple corollary.

Our concern here will be with a variant introduced by Erd\H{o}s~\cite{E78, E81}. Given a set $S \subset \mathbb{R}^d$, we say that points $s_1, \dots, s_\ell \in S$ form an \emph{$\ell$-hole} if they are in convex position and no point of $S$ is contained in the interior of the convex polytope formed by $s_1, \dots, s_\ell$. Erd\H{o}s' question was whether, for each $\ell$, there exists $n$ such that any set of $n$ points in general position in the plane contains an $\ell$-hole. That such an $n$ exists for $\ell = 5$ was proved by Harborth~\cite{H78} in 1978, though it took almost thirty years more for the $\ell = 6$ case to be solved in the affirmative by Nicol\'as~\cite{N07} and, independently, Gerken~\cite{G08}. At least in the plane, this is where the story ends, since there is a remarkable construction, due to Horton~\cite{H83}, of arbitrarily large point sets in general position with no $7$-hole.

In higher dimensions, Horton-type sets were first constructed by Valtr~\cite{V92}, who found arbitrarily large $T \subset \mathbb{R}^d$ in general position containing no $B_d$-hole, where $B_d = d^{d + o(d)}$. Very recently, a more efficient construction was given by Bukh, Chao and Holzman~\cite{BCH21} (see also~\cite{BC21}), who showed that one may take $B_d = 4^{d +o(d)}$. Surprisingly, the best lower bound remains that of Valtr~\cite{V92}, which says that sufficiently large point sets in general position in $\mathbb{R}^d$ contain $(2d+1)$-holes.

Suppose, however, that one starts with a large hole or, say, a large random point set, which are known~\cite{BGS13} to contain many large holes. Is it then possible to add points to the set to obtain a set with no large holes? It is this natural question that we address here, our main result saying that any finite $S \subset \mathbb{R}^d$ may be filled out to form a set without large holes. Note that here and throughout, we will say that a set $S \subset \RR^d$, which need not be in general position, is {\it $\ell$-hole-free} if, for any set of $\ell$ points $s_1,\ldots,s_\ell\in S$, there is a point $s\in S$ in the interior of the convex hull of $\{s_1, s_2, \dots, s_\ell\}$.

\begin{thm} \label{thm:main}
For any integer $d \geq 2$, there exists an integer $C_d=d^{O(d^3)}$ such that if $S \subset \mathbb{R}^d$ is a finite set in general position, then there are arbitrarily large supersets $T \supseteq S$ in general position with the property that $T$ is $C_d$-hole-free. In particular, when $d = 2$, one may take $C_2 = 9$.
\end{thm}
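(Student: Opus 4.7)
The strategy is to reduce Theorem~\ref{thm:main} to the key lemma flagged in the abstract, which asserts that the lattice $[n]^d$ admits arbitrarily small perturbations, in general position, with no $C_d$-hole. Granted this key lemma, the theorem should follow by embedding $S$ into such a perturbed lattice after a suitable affine rescaling.

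To carry out the reduction, let $S = \{s_1, \dots, s_k\} \subset \RR^d$. The plan is to apply a scaling $x \mapsto Nx$ and show that, for suitable arbitrarily large $N$, each $Ns_i$ lies within $\epsilon$ of a distinct lattice point $p_i \in [n]^d$. This is a straightforward instance of simultaneous Diophantine approximation: with $kd$ real coordinates to approximate, Dirichlet's theorem produces arbitrarily large $N$ making every $\|Ns_{ij}\|_{\ZZ}$ as small as we wish. The second ingredient is a mild strengthening of the key lemma: for any prescribed ``anchor'' points $q_1, \dots, q_k$ with $\snorm{q_i - p_i} < \epsilon$, there should exist a perturbation $P \supseteq \{q_1,\dots,q_k\}$ of $[n]^d$ that is in general position and $C_d$-hole-free. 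This strengthening ought to be available because the construction of the perturbation leaves enough freedom in positioning each non-anchor point within an $\epsilon$-ball to accommodate $k$ fixed anchors while preserving the relevant genericity and hole-avoidance conditions. Applying this version with $q_i = Ns_i$ and inverting the scaling produces $T \supseteq S$; letting $n \to \infty$ gives arbitrarily large $T$.

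The principal obstacle is the key lemma itself. The unperturbed lattice $[n]^d$ is full of empty configurations (in fact every ``primitive'' simplex is a hole), so the perturbation has to simultaneously destroy all large empty convex polytopes by shifting lattice points into their interiors. I would attempt this by induction on the dimension, building a perturbation of $[n]^d$ as a stack of perturbed $[n]^{d-1}$-slices whose inter-slice shifts follow a Horton-style or Bukh--Chao--Holzman-style pattern: any would-be $C_d$-hole that spans many slices must pick up a point of an intermediate slice in its interior, while one confined to few slices reduces to the lower-dimensional problem after projection. The base case $d=2$ presumably gives $C_2 = 9$ directly from a planar Horton-type argument applied to a perturbed grid, and the $C_d = d^{O(d^3)}$ bound is compatible with each inductive step multiplying the hole threshold by a factor of $d^{O(d^2)}$. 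Making the ``either spans many slices or fits into a lower-dimensional slab'' dichotomy quantitative, and ensuring that small perturbations do not introduce new holes between slices, is where I expect the real work to lie.
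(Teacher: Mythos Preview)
Your reduction has a genuine gap: the ``mild strengthening'' of the key lemma that allows $k$ prescribed anchor points is neither proved in the paper nor obviously available. The paper's perturbed lattice $\cP$ is a Minkowski sum of $d$ one-dimensional Horton-type sets along the coordinate axes, so individual points of $\cP$ cannot be moved independently without destroying the global Horton structure that underlies the hole-free property; there is no evident ``freedom in positioning each non-anchor point'' of the kind you invoke. In fact, if your anchored version were available it would yield $C_2 = C'_2 = 7$ directly, yet the paper explicitly leaves $C_2 = 7$ as an open problem---so either the strengthening fails or it requires a substantially new idea.

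The paper's reduction avoids this entirely and is much simpler: take $T = S \cup \cQ$, where $\cQ$ is a homothetic copy of the perturbed lattice from Theorem~\ref{thm:lattice}, scaled and translated finely enough that every $(d+1)$-point subset of $S$ already contains a point of $\cQ$ in the interior of its convex hull (this uses only that $S$ is finite and in general position, no Diophantine approximation). Then any would-be hole in $T$ can contain at most $d$ points of $S$ and at most $C'_d - 1$ points of $\cQ$, giving $C_d = C'_d + d$; in particular $C_2 = 7 + 2 = 9$, which is exactly where the $9$ comes from. A final small perturbation puts $T$ in general position while preserving the hole-free property.

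Your outline of the key lemma itself (stack perturbed $(d-1)$-slices with Horton-style inter-slice shifts, then argue a dichotomy between holes spanning many slices and holes confined to few) is in the right spirit. The paper organises this somewhat differently: a Ramsey-type dichotomy on lattice subsets (either many points lie on a hyperplane, or the set is ``well-spread'' and contains a basic cube in its convex hull) combined with a lifting lemma that propagates well-spreadness up through the Horton structure, with the induction running on the dimension of the affine span of the offending subset rather than on the ambient dimension.
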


We suspect that this theorem may remain true in two dimensions with $C_2 = 7$. Our methods do not suffice to show this, so we leave it instead as a tantalising open problem.

In practice, Theorem~\ref{thm:main} will follow from another theorem of independent interest, saying that there is a set with no large holes which approximates the set of lattice points $[n]^d$. For $d = 2$, such a theorem is already implicit in work of Valtr~\cite{V92-2}. The main technical result of this paper is the analogous result for higher dimensions.

\begin{thm} \label{thm:lattice}
For any integers $n \geq 1$ and $d \geq 2$ and any $\varepsilon > 0$, there exists an integer 
$C_d'=d^{O(d^3)}$ and a set of points $\cP=\setcond{P_{\vec{x}}}{\vec{x}\in [n]^d}\subset\RR^d$ that is $C_d'$-hole-free and satisfies $\norm{P_{\vec{x}}-\vec{x}}<\varepsilon$ for all $\vec{x}\in [n]^d$. In particular, when $d = 2$, one may take $C_2' = 7$.
\end{thm}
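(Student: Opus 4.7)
The plan is induction on $d$, with the base case $d=2$ given by Valtr's implicit construction~\cite{V92-2} that yields $C_2'=7$. For the inductive step, assume we have a set $\mathcal Q=\{Q_{x'}\}_{x'\in[n]^{d-1}}\subset\RR^{d-1}$ that is $C_{d-1}'$-hole-free and satisfies $\snorm{Q_{x'}-x'}<\varepsilon/2$. Build $\cP^{(d)}$ in $\RR^d$ by stacking $n$ copies of $\mathcal Q$ along the last coordinate:
\[
P_{(x',i)} \;:=\; \paren{Q_{x'},\; i+\eta_{x',i}} \qquad \text{for } x'\in[n]^{d-1},\; i\in[n],
\]
with $|\eta_{x',i}|<\varepsilon/2$ chosen so that (i) $\cP^{(d)}$ is in general position and (ii) the heights $\{i+\eta_{x',i}\}_i$ follow a Horton-style interleaving of small positive and negative offsets, for reasons that will become clear below. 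The norm bound $\snorm{P_{\vec x}-\vec x}<\varepsilon$ follows from the triangle inequality.

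Now suppose $H=\operatorname{conv}(V)$ is a convex polytope whose vertex set $V\subset\cP^{(d)}$ has size $N=C_d'$, and let $\pi\colon\RR^d\to\RR^{d-1}$ drop the last coordinate. If $|\pi(V)|<C_{d-1}'$, then pigeonhole gives some $x'\in[n]^{d-1}$ receiving more than $N/C_{d-1}'$ vertex projections; these vertices of $H$ all lie on the vertical line $\pi^{-1}(Q_{x'})$ and are therefore collinear, but a convex polytope admits at most two collinear vertices, so this case is excluded once $C_d'>2C_{d-1}'$. Otherwise $|\pi(V)|\geq C_{d-1}'$, and the inductive hypothesis applied to $\pi(V)\subseteq\mathcal Q$ produces some $q^*=Q_{x^*}$ in the interior of $\pi(H)=\operatorname{conv}(\pi(V))$. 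The vertical fiber $\pi^{-1}(q^*)\cap H$ is a nondegenerate segment $[a,b]$, and provided $b-a>1+\varepsilon$, some integer $i\in(a+\varepsilon/2,b-\varepsilon/2)$ exists, whence $P_{(x^*,i)}$ is a $\cP^{(d)}$-point in the interior of $H$.

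The main obstacle is the \emph{short-fiber subcase}, when $b-a\leq 1+\varepsilon$, since then no integer height---perturbed only by the tiny $\eta$---need fall inside $(a,b)$. The plan to handle this is twofold. First, a short fiber above $q^*$ forces the vertices of $H$ to cluster among at most a few consecutive layer indices, so the problem restricts to a thin slab $\RR^{d-1}\times[z,z+O(1)]$ containing $H$; one then chooses an affine projection from this slab onto a $(d-1)$-dimensional subspace under which the $C_d'$ vertices remain in a $(d-1)$-dimensional configuration, and iterates the inductive hypothesis to locate a $\mathcal Q$-point in the interior of the projected polytope. Second, lifting this $\mathcal Q$-point back to $\cP^{(d)}$ requires selecting $i\in\{z,z+1,\ldots\}$ and exploiting the Horton-like interleaving of $\eta_{x^*,i}$: the coordinated sign and magnitude pattern ensures that for some choice of $x^*$ and $i$, the height $i+\eta_{x^*,i}$ lies strictly between the supporting hyperplanes of $H$ at that fiber. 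Carrying out this analysis quantitatively costs a factor of $d^{O(d^2)}$ per inductive step, giving $C_d'=d^{O(d^2)}\cdot C_{d-1}'$ and hence $C_d'=d^{O(d^3)}$ overall.
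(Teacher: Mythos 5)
There are two genuine gaps here, one in the construction and one in the analysis. First, the construction: you stack identical copies of $\mathcal{Q}$ and perturb only the last coordinate, so for each fixed $x'$ the $n$ points $P_{(x',1)},\dots,P_{(x',n)}$ all share the same first $d-1$ coordinates $Q_{x'}$ and are therefore collinear. This contradicts your claim (i) that $\cP^{(d)}$ is in general position, and it is fatal for hole-freeness as defined in the paper: any $C_d'$ points taken from a single column have a convex hull which is a segment, whose interior contains no point of $\cP^{(d)}$ (indeed no point of $\RR^d$ other than points of that segment). The same problem recurs for every lattice line and, more generally, every lattice-affine subspace of $[n]^d$: a large subset of index points lying on (or near) such a subspace produces a sliver-shaped hull, and the perturbation must be coordinated so that the restriction of the point set to \emph{every} such subspace is itself hole-free in the ambient $\RR^d$. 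This is why the paper's construction is a Minkowski sum of perturbed copies of $[n]$ in all $d$ directions and, via property 4, arranges that the restriction to every $k$-dimensional lattice-affine subspace is a $(d,k)$-Horton set.

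Second, the short-fiber case is not a residual technicality but the heart of the matter, and your plan for it rests on a false premise. A short fiber over the single point $q^*$ does not force the vertices of $H$ to cluster in a few consecutive layers: $H$ can be a thin sliver lying near a \emph{tilted} hyperplane (say within distance $0.1$ of $\{z=x_1\}$), in which case every vertical fiber is short while the vertices span all $n$ layers, so there is no thin horizontal slab to restrict to. Relatedly, the inductive hypothesis hands you only \emph{one} interior point $q^*$ of $\operatorname{conv}(\pi(V))$, and nothing prevents that particular fiber from being short even when $H$ is fat; the paper explicitly notes that a single interior lattice point is not enough and replaces it with the notion of being $(N,r)$-well-spread (many interior points, one in each residue class of each level), which is what makes the lifting step (Lemma~\ref{lem:goup}) work. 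The correct dichotomy is the one in Lemma~\ref{lem:well-spread}: either many of the $C_d'$ index points lie on a common lattice hyperplane (reduce to a lower-dimensional lattice-affine subspace, which is why the construction must be hole-free on all such subspaces), or the index set is well-spread, in which case the Horton structure lifts well-spreadness from the projection to the full set. Your proposal contains neither the strengthened inductive statement nor a construction supporting it, so the inductive step does not close.
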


Given finite sets $A,B\subset \RR^d$ and $\varepsilon>0$, we call a bijection $f:A\to B$ an \emph{$\varepsilon$-perturbation} if  $\norm{x-f(x)}<\varepsilon$ for all $x\in A$. With this notation, Theorem~\ref{thm:lattice} can be reformulated as saying that for any $\varepsilon > 0$ there exists an $\varepsilon$-perturbation of $[n]^d$ which is $C'_d$-hole-free. For brevity, we will usually discuss Theorem~\ref{thm:lattice} in these terms.

Notice that if a finite set $T\subset\RR^d$ is $\ell$-hole-free (where $T$ may not be in general position), then any sufficiently small perturbation of $T$ will still be $\ell$-hole-free. Indeed, this is why we define $\ell$-hole-free as we do rather than simply saying it is $\ell$-hole-free if it contains no $\ell$-hole. Moreover, for any $T$, there are arbitrarily small perturbations of $T$ which put $T$ in general position. Hence, to prove Theorem~\ref{thm:main}, we only need to find arbitrarily large $\ell$-hole-free supersets $T$ of $S$ without worrying about whether or not they are in general position. With this observation, we can quickly show how  Theorem~\ref{thm:main} follows  from Theorem~\ref{thm:lattice}.

\begin{proof}[Proof of Theorem~\ref{thm:main} given Theorem~\ref{thm:lattice}]
Let $n$ be sufficiently large in terms of $S$ and let $\cP$ be as given by Theorem~\ref{thm:lattice} with this $n$ and $\varepsilon = 1/10$. Set $C_d=C_d'+d$. Since $S$ is in general position and $n$ (and, hence, $|\cP|$) is sufficiently large in terms of $S$, we can scale and translate $\cP$ in such a way that any $d+1$ point subset of $S$ has a point of this homothetic copy $\cQ$ of $\cP$ in the interior of its convex hull. We then set $T=S\cup \cQ$. Suppose now that $A\subset T$ is such that $\Int\conv A$ does not contain a point of $T$. Then $A$ cannot contain $d+1$ points of $S$, since otherwise the interior of its convex hull will contain a point of $\cQ$. $A$ also cannot contain $C_d'$ points of $\cQ$, since $\cQ$ is $C_d'$-hole-free. Thus, $|A|\leq C_d'+d-1=C_d-1$ and $T$ is $C_d$-hole-free. If $T$ is not in general position, then, following the paragraph above, we can move it to general position with a sufficiently small perturbation while preserving the fact that it is $C_d$-hole-free.
\end{proof}

Though it already appears in similar terms in the work of Valtr~\cite{V92-2}, we begin by taking a close look at the planar case of Theorem~\ref{thm:lattice}, since it will inform our arguments in higher dimensions.

\section{The planar case}

The following definition will be important throughout the paper.

\begin{defn}
A \emph{levelled set} $L$ is a subset of $\RR^d$ together with a surjective affine map $\phi_L:\RR^d\to \RR$ such that $\phi_L(L)\subset\ZZ$. We say that $\phi_L$ is the \emph{level map} of $L$.
\end{defn}

Note that a subset of a levelled set is also a levelled set with the same level map. In practice, we will often be interested in a particular type of subset.

\begin{defn}
Given a levelled set $L\subset\RR^d$ and integers $a$ and $p$ with $p \geq 1$, define the set
$$L_{a,p}=\setcond{x\in L}{\phi_L(x)\equiv a \; (\bmod{\; p})}.$$
We can make $L_{a,p}$ into a levelled set with the level map $\phi_{L_{a,p}}(x)=(\phi_L(x)-a)/p$. Note that this level map is not the same as when $L_{a,p}$ is simply viewed as a levelled subset of $L$.
\end{defn}

We also fix some notation that we will use throughout, writing $\vec{e}_1,\vec{e}_2,\ldots,\vec{e}_d$ for the standard basis of $\RR^d$ and $\pi_i:\RR^d\to\RR$ for the projection onto the $i$-th coordinate for any $i = 1, 2, \dots, d$.

Now we make some definitions which are more specific to two dimensions, only generalising to higher dimensions later.

\begin{defn} \label{def:2ha}
For finite sets $A,B\subset\RR^2$, we say that \emph{$A$ lies high above $B$} and that \emph{$B$ lies deep below $A$} if:
\begin{enumerate}
\item for any pair of points $p,q\in A$ with distinct $x$-coordinates, the entire set $B$ lies below the line joining $p$ and $q$;
\item for any pair of points $p,q\in B$ with distinct $x$-coordinates, the entire set $A$ lies above the line joining $p$ and $q$.
\end{enumerate}
\end{defn} 

It is not hard to see that given two finite sets $A$ and $B$ we can always shift $A$ upwards so that $A$ lies high above $B$. More precisely, there is some $M$ such that, for $m>M$, the set $A+m\vec{e}_2$ lies high above $B$. 

Our definition of a Horton set is similar to that in \cite{V92} and \cite{V92-2}, but rephrased in terms of levelled sets.

\begin{defn}
\label{def:2horton}
Let $H\subset\RR^2$ be a levelled set with level map of the form $\phi_H=a\pi_1+b$ for some $a,b\in\RR$ with $a\neq 0$ and suppose that $\phi_H$ is injective on $H$ with $\phi_H(H)$ a consecutive set of integers. We say that $H$ is \emph{Horton} if it is Horton according to a finite number of applications of the following rules:
\begin{enumerate}
\item The empty set or any singleton levelled set is Horton.
\item If $H_{0,2}$ and $H_{1,2}$ are Horton and $H_{0,2}$ lies deep below or high above $H_{1,2}$, then $H$ is Horton.
\end{enumerate}
\end{defn}

\begin{example}
\label{ex:2horton}
Following \cite{H83, V92, V92-2}, we can construct Horton sets as follows. For any positive integer $N$, write it in binary as $N=\sum_{k\geq 0} a_k 2^k$, where $a_k\in\{0,1\}$. Denote by $(N)_\varepsilon$ the real number 
$$(N)_\varepsilon=\sum_{k\geq 0} a_k \varepsilon^{k+1},$$
noting that $0<(N)_\varepsilon<2\varepsilon$ for $\varepsilon<1/2$. Consider the set $S=\{P_x\mid x\in [n]\}$, where $P_x=(x,(x)_\varepsilon)$. Observe that, for $\varepsilon$ sufficiently small, $S_{0,2}$ lies deep below $S_{1,2}$. By recursively applying this observation to the sets $S_{0,2}$ and $S_{1,2}$, we can easily see that $S$ is Horton for $\varepsilon$ sufficiently small.
\end{example}

To say something about the properties of Horton sets, we require some further definitions.

\begin{defn}
A sequence of points $p_1,\ldots,p_r\in\RR^2$ with $\pi_1(p_1)<\cdots<\pi_1(p_r)$ is said to be
\begin{enumerate}
    \item \emph{convex} if, for all $i, j, k$ with $1 \leq i < j < k \leq r$, the point $p_j$ lies below or on the straight line $p_ip_k$,
    \item \emph{concave} if, for all $i, j, k$ with $1 \leq i < j < k \leq r$, the point $p_j$ lies above or on the straight line $p_ip_k$.
\end{enumerate}
\end{defn}

\begin{defn}
A convex sequence of $r$ points $p_1, p_2,\ldots,p_r \in\RR^2$ with $\pi_1(p_1)<\cdots<\pi_1(p_r)$ is \emph{upper closed} by a point $p$ if $\pi_1(p_1)< \pi_1(p)< \pi_1(p_r)$ and the point $p$ lies above the polygonal line $p_1p_2\ldots p_r$. Similarly, a concave sequence of $r$ points $p_1,p_2,\ldots,p_r \in\RR^2$ with $\pi_1(p_1)<\cdots<\pi_1(p_r)$ is \emph{lower closed }by a point $p$ if $\pi_1(p_1)< \pi_1(p)< \pi_1(p_r)$ and the point $p$ lies below the polygonal line $p_1p_2\ldots p_r$.
\end{defn}

\begin{defn}
Let $A\subset\RR^2$ be a finite set of points for which $\pi_1$ is injective. $A$ is said to be
\begin{enumerate}
    \item \emph{upper $r$-closed} if every convex sequence of $r$ points from $A$ is upper closed by some point of $A$,
    \item \emph{lower $r$-closed} if every concave sequence of $r$ points from $A$ is lower closed by some point of $A$,
    \item \emph{$r$-closed} if it is both upper and lower $r$-closed. 
\end{enumerate}
\end{defn}

Given these definitions, we can record the following results from \cite{V92-2}.

\begin{lem} \label{lem:rshole}
Let $A, B \subset\RR^2$ be finite sets of points which are each $(r+s-1)$-hole-free. Suppose that $\pi_1$ is injective on $A$ and $B$, that $A$ is upper $r$-closed, that $B$ is lower $s$-closed and that $A$ lies deep below $B$. Then the set $A\cup B$ is also $(r + s - 1)$-hole-free. 
\end{lem}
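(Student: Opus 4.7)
The plan is to take an arbitrary $H\subseteq A\cup B$ with $|H|=r+s-1$ and produce a witness $p\in A\cup B$ in $\Int\conv H$. Writing $H_A=H\cap A$ and $H_B=H\cap B$, if either is empty we apply the $(r+s-1)$-hole-freeness of $A$ or $B$ directly, and if some $h\in H$ already lies in $\Int\conv H$ we take $p=h$. So I assume $H$ is in convex position in $\RR^2$ and both $H_A,H_B$ are nonempty.

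The key structural step is to show that every internal vertex of the lower boundary of $\conv H$ (i.e.\ one whose $x$-coordinate is not extremal in $H$) lies in $H_A$, and dually every internal vertex of the upper boundary lies in $H_B$. To establish this, suppose some $q\in H_B$ is an internal vertex of the lower boundary, and let $a^L,a^R$ be the $x$-extreme points of $H_A$. If $\pi_1(a^L)\le\pi_1(q)\le\pi_1(a^R)$, the deep-below property places $q$ strictly above the chord $a^La^R$ of $\conv H$, contradicting lower-boundary membership. Otherwise, say $\pi_1(q)<\pi_1(a^L)$; since $q$ is not $x$-extremal in $H$, there exists $q''\in H$ with $\pi_1(q'')<\pi_1(q)$, and $q''$ is forced into $H_B$ (it is to the left of every $A$-point). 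Deep below applied to the $B$-pair $\{q'',q\}$ puts $a^L$ strictly below the line $q''q$, and a short slope comparison translates this into $q$ lying strictly above the chord $q''a^L$, again a contradiction.

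Consequently, all $|H_A|$ points of $H_A$ lie on the lower boundary of $\conv H$ as a convex sequence (in the paper's sense), and dually for $H_B$ on the upper boundary. Since $|H_A|+|H_B|=r+s-1$, by symmetry I may assume $|H_A|\ge r$. Letting $c_1,\ldots,c_r$ be the $r$ leftmost $A$-vertices on the lower boundary, these form a convex $r$-sequence in $A$, so the upper $r$-closed property of $A$ yields $p\in A$ with $\pi_1(c_1)<\pi_1(p)<\pi_1(c_r)$ lying strictly above the polygonal line $c_1c_2\cdots c_r$.

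It remains to verify $p\in\Int\conv H$. Since $c_1,\ldots,c_r$ are consecutive on the lower boundary, their polygonal line \emph{is} the lower boundary of $\conv H$ on $[\pi_1(c_1),\pi_1(c_r)]$, so $p$ lies strictly above the lower boundary at $\pi_1(p)$. For the upper boundary at $\pi_1(p)$, the edge involved has at least one $H_B$-endpoint by the structural step: if both are in $H_B$ then deep below puts $p$ strictly below this edge, and otherwise the other endpoint is an $x$-extreme point of $H$ in $H_A$, and the same slope-comparison idea, this time applied to the $A$-pair consisting of $p$ and that extreme point, shows the edge again lies strictly above $p$ at $\pi_1(p)$. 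Thus $p\in\Int\conv H$, as required. The main obstacle I expect is the second case of the structural step, where the problematic $B$-vertex sits outside the $x$-range of $H_A$ and one must invoke an auxiliary $B$-point $q''$ and carry out the slope comparison carefully.
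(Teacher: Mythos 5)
Your argument is correct, and it is essentially the standard one: the paper records this lemma from Valtr without proof, and your route --- use the deep-below condition to force every non-extremal lower-hull vertex of $H$ into $H\cap A$ and every non-extremal upper-hull vertex into $H\cap B$, apply upper $r$-closedness (or lower $s$-closedness) to whichever side has the majority, and then check the closing point against both boundary chains --- is exactly Valtr's. The only loose ends are degenerate configurations that the stated hypotheses do not formally exclude: a point of $A$ and a point of $B$ sharing an $x$-coordinate, the case $|H\cap A|=1$ in which your ``chord $a^La^R$'' degenerates to a point, points of $H$ lying on the relative interior of a hull edge (so that $H$ need not be in convex position even when no point of $H$ is interior), and collinear triples in $A$ or $B$ that would stop the $r$ leftmost points of $H\cap A$ from forming a convex sequence in the paper's strict sense; all of these are either easily patched by the same deep-below computations you already use or are absent from the Horton sets to which the lemma is actually applied.
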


\begin{lem} \label{lem:subhorton2}
If $H$ is Horton, then, for any integers $a$ and $p$ with $p \geq 1$, the set $H_{a,p}$ is also Horton.
\end{lem}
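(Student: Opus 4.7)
The plan is to prove the lemma by strong induction on $|H|$. The base case $|H|\leq 1$ is immediate: $H_{a,p}$ is then empty or a singleton, both of which are Horton by rule~1. For the inductive step, suppose $H$ is Horton via rule~2, so $H_{0,2}$ and $H_{1,2}$ are Horton and one of them lies deep below or high above the other.

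Before splitting into cases, I would verify the side conditions of the Horton definition for $H_{a,p}$. Its level map $\phi_{H_{a,p}}(x)=(\phi_H(x)-a)/p$ is affine in $\pi_1$ with nonzero slope, is injective on $H_{a,p}$ (inherited from $\phi_H$ on $H$), and has image a consecutive set of integers, since $\phi_H(H_{a,p})$ is an arithmetic progression of common difference $p$ inside the consecutive range $\phi_H(H)$.

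The main analysis splits on the parity of $p$. When $p$ is odd, a direct residue computation modulo $2p$ shows that $(H_{a,p})_{0,2}$ coincides, as a levelled set, with either $(H_{0,2})_{a/2,p}$ (if $a$ is even) or $(H_{1,2})_{(a-1)/2,p}$ (if $a$ is odd), and that $(H_{a,p})_{1,2}$ coincides with the corresponding level subset of the other of $H_{0,2},H_{1,2}$. Since $|H_{0,2}|,|H_{1,2}|<|H|$, the induction hypothesis yields that both of these subsets are Horton. Moreover, the deep-below/high-above relation between $H_{0,2}$ and $H_{1,2}$ is automatically inherited by their subsets (the defining line conditions of Definition~\ref{def:2ha} quantify over pairs in the respective sets, so passing to subsets only weakens the hypothesis), so rule~2 applies to $H_{a,p}$.

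When $p$ is even, the key observation is the identity $H_{a,p}=(H_{a\bmod 2,\,2})_{a',\,p/2}$ of levelled sets, where $a'=(a-(a\bmod 2))/2$. Since $H_{a\bmod 2,\,2}$ is Horton and strictly smaller than $H$, the induction hypothesis applied to it with parameters $a'$ and $p/2$ immediately yields that $H_{a,p}$ is Horton. The edge case $p=1$ is absorbed into the odd case and reduces to the observation that integer shifts of the level map preserve Horton-ness. The main obstacle throughout is purely bookkeeping: one must check that each of these identifications matches not merely as point sets but as levelled sets, i.e., that the level maps agree on the nose, which amounts to a direct but fiddly chase of residues modulo $2p$.
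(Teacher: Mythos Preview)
Your proof is correct. The paper does not actually prove Lemma~\ref{lem:subhorton2}; it is simply recorded as a result from~\cite{V92-2} without proof. That said, your argument is essentially the two-dimensional specialisation of the paper's proof of the higher-dimensional analogue, Lemma~\ref{lem:subhorton}: there too the induction splits on whether the relevant prime (here $p_d=2$) divides $p$, reducing in the divisible case to $(H_{a,2})_{a',p/2}$ and in the coprime case to identifying $(H_{a,p})_{i,2}$ with a set of the form $(H_{i',2})_{a'',p}$ and observing that the deep-below/high-above relation is inherited by subsets. The only cosmetic difference is that you induct on $|H|$ whereas the paper phrases its induction in terms of a notion of complexity; for planar Horton sets with $|H|\geq 2$ these amount to the same thing, since $\phi_H(H)$ being a set of consecutive integers forces both $H_{0,2}$ and $H_{1,2}$ to be strictly smaller than $H$.
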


\begin{lem} \label{lem:7hole}
Any Horton set is 4-closed and 7-hole-free.
\end{lem}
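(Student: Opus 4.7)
The plan is to prove both statements simultaneously by strong induction on $|H|$. The base case $|H|\le 3$ is vacuous: there are no four-point convex sequences and no seven-point subsets to check. For the inductive step, we write $H = H_{0,2}\cup H_{1,2}$ according to the Horton recursion in Definition~\ref{def:2horton}, with both parts Horton and one lying deep below or high above the other; by relabeling we may assume $H_{0,2}$ lies deep below $H_{1,2}$. By the inductive hypothesis, both $H_{0,2}$ and $H_{1,2}$ are $4$-closed and $7$-hole-free. The $7$-hole-freeness of $H$ is then immediate from Lemma~\ref{lem:rshole}, applied with $r=s=4$, $A=H_{0,2}$, and $B=H_{1,2}$.

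For $4$-closedness, we focus on upper $4$-closedness, since lower $4$-closedness follows by symmetry (reflecting in the $x$-axis swaps deep below with high above and upper with lower). Let $p_1,p_2,p_3,p_4\in H$ form a convex sequence with $\pi_1(p_1)<\cdots<\pi_1(p_4)$. The first key observation is that the indices $i$ with $p_i\in H_{0,2}$ must form a contiguous block in $\{1,2,3,4\}$: if $p_i,p_k\in H_{0,2}$ and $p_j\in H_{1,2}$ with $i<j<k$, then the high above condition places $p_j$ above the line $p_ip_k$, contradicting convexity.

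We then split into cases on the membership pattern. If all four points lie in $H_{0,2}$, consecutive levels differ by at least $2$, so there is a point $q\in H_{1,2}$ between $p_1$ and $p_2$ in $x$-coordinate; by high above, $q$ lies above the segment $p_1p_2$ and hence upper-closes the sequence. If all four lie in $H_{1,2}$, the inductive hypothesis applied to $H_{1,2}$ yields the upper-closing point directly. Whenever the $H_{0,2}$ block contains two consecutive indices, the same intermediate-$H_{1,2}$-point trick handles that $0$-$0$ segment: the resulting $q\in H_{1,2}$ lies above the corresponding segment of the polygonal line, which suffices since $\pi_1(q)$ falls in the interior of that segment.

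The principal obstacle is the remaining sub-case in which the $H_{0,2}$ block consists of a single index (patterns $0111$, $1011$, $1101$, $1110$): there is no $0$-$0$ segment to exploit, and the three $H_{1,2}$ points form only a $3$-convex subsequence inside $H_{1,2}$. To handle these patterns, the plan is to extend the three $H_{1,2}$ points to a $4$-convex sequence inside $H_{1,2}$ by adjoining an auxiliary point drawn from $H_{1,2}$, whose existence is guaranteed by the recursive deep below/high above structure within $H_{1,2}$ together with Lemma~\ref{lem:subhorton2}. Applying the inductive hypothesis to this extended sequence produces an upper-closing point $q\in H_{1,2}$; we then verify that $\pi_1(q)\in(\pi_1(p_1),\pi_1(p_4))$ and that $q$ lies above the original polygonal line, the latter using the high above condition to dominate the contribution of the sole $H_{0,2}$ index. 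Making this auxiliary-point construction work uniformly across all four single-$H_{0,2}$ patterns, and ensuring the resulting point always falls in the required $x$-range, is expected to be the most delicate part of the argument.
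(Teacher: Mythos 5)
Your overall architecture is the standard one (and the one the paper implicitly relies on, since it cites Valtr for this lemma rather than proving it): induct on $|H|$, get $7$-hole-freeness from Lemma~\ref{lem:rshole} with $r=s=4$, and prove $4$-closedness directly. The contiguity observation and the intermediate-point trick for a consecutive pair in the lower part are both correct. But the proof is not complete: you explicitly leave the patterns $0111$, $1011$, $1101$, $1110$ as a ``plan'' whose delicate steps are unverified, and that plan is unlikely to go through as stated (a given $3$-point convex subsequence of $H_{1,2}$ need not extend to a $4$-point convex sequence inside $H_{1,2}$ at all, let alone one whose upper-closing point lands in the right $x$-range and clears the original polygonal line).

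The gap closes once you notice the dual of your ``first key observation.'' Suppose $H_{0,2}$ lies deep below $H_{1,2}$, and let $p_1,p_2,p_3,p_4$ be a convex sequence. If $p_j\in H_{0,2}$ and $p_k,p_l\in H_{1,2}$ with $\pi_1(p_j)$ \emph{not} strictly between $\pi_1(p_k)$ and $\pi_1(p_l)$, then convexity forces $p_j$ to lie \emph{above} the line $p_kp_l$ (an endpoint of a convex sequence lies above the chord through two interior points, extended), while ``deep below'' forces it to lie below that line --- a contradiction. Hence every $H_{0,2}$-point of the sequence lies strictly between every pair of $H_{1,2}$-points; with three points in $H_{1,2}$ the intersection of those three intervals is empty, so a convex sequence with exactly one point in $H_{0,2}$ and three in $H_{1,2}$ simply cannot exist. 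Combined with your contiguity observation, the mixed case always yields two \emph{consecutive} sequence points in $H_{0,2}$, and your intermediate-point argument finishes. (Two further small points: the existence of the intermediate point uses that $\phi_H(H)$ is a consecutive set of integers, which is part of Definition~\ref{def:2horton} and worth saying; and your ``relabel so that $H_{0,2}$ is deep below $H_{1,2}$'' should be phrased as exploiting the symmetry of the argument in the two halves, since which residue class is deep below is not something you get to choose.)
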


We also note one further result about Horton sets.

\begin{lem} \label{lem:double7hole}
Let $H_1$ and $H_2$ be Horton and suppose $H_1$ lies deep below $H_2$. Then $H_1\cup H_2$ is 7-hole-free.
\end{lem}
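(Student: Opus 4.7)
The plan is to apply Lemma~\ref{lem:rshole} directly to $A = H_1$ and $B = H_2$ with parameters $r = s = 4$, so that $r + s - 1 = 7$. This is a natural choice since Lemma~\ref{lem:7hole} tells us that Horton sets are exactly $4$-closed and $7$-hole-free, matching the two types of hypotheses required by Lemma~\ref{lem:rshole}.

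The steps I would carry out are as follows. First, I would observe that $\pi_1$ is injective on each of $H_1$ and $H_2$: this is built into Definition~\ref{def:2horton}, since the level map $\phi_H = a\pi_1 + b$ with $a \neq 0$ is assumed to be injective on $H$. Second, I would invoke Lemma~\ref{lem:7hole} to conclude that $H_1$ and $H_2$ are each $7$-hole-free and each $4$-closed; in particular, $H_1$ is upper $4$-closed and $H_2$ is lower $4$-closed. Third, I would use the hypothesis that $H_1$ lies deep below $H_2$, which is given. All four hypotheses of Lemma~\ref{lem:rshole} (with $r = s = 4$) are now in place, and its conclusion immediately yields that $H_1 \cup H_2$ is $7$-hole-free.

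There is essentially no obstacle here — the lemma is a direct specialisation of Lemma~\ref{lem:rshole} using the strong closedness and hole-freeness properties of Horton sets from Lemma~\ref{lem:7hole}. The one place to be careful is to use the symmetric version of $4$-closedness correctly: the lower set $H_1$ supplies the upper-$r$-closedness hypothesis, while the upper set $H_2$ supplies the lower-$s$-closedness hypothesis, which is the correct pairing for Lemma~\ref{lem:rshole}.
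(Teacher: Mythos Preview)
Your proposal is correct and matches the paper's proof exactly: the paper also invokes Lemma~\ref{lem:7hole} to get that $H_1$ and $H_2$ are $4$-closed and $7$-hole-free, then applies Lemma~\ref{lem:rshole} with $r=s=4$. Your write-up is simply a more explicit version of the same two-line argument.
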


\begin{proof}
By Lemma~\ref{lem:7hole}, $H_1$ and $H_2$ are 4-closed and 7-hole-free. Thus, by Lemma~\ref{lem:rshole}, $H_1\cup H_2$ is 7-hole-free.
\end{proof}

The following lemma characterises those finite sets of lattice points (that is, those finite subsets of $\ZZ^2$) which contain no lattice point in the interior of their convex hull. Once again, this result may be found in~\cite{V92-2}. However, since finding an appropriate generalisation of this result is one of the key steps in higher dimensions, we give the proof in full.

\begin{lem}
\label{lem:parallel}
Let $S$ be a finite set of lattice points of size at least 7. Then either the interior of the convex hull of $S$ contains a lattice point or $S$ is covered by 2 parallel lines.
\end{lem}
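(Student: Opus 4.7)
The plan is to prove the contrapositive: if $\Int\conv S$ contains no lattice point and $S$ is not covered by two parallel lines, then $|S|\le 6$. Setting $P = \conv S$, the hypothesis says $\Int P \cap \ZZ^2 = \emptyset$, so all lattice points of $P$ lie on $\partial P$ and $S \subseteq P \cap \ZZ^2$. It therefore suffices to show that a hollow convex lattice polygon $P$ (one with $\Int P \cap \ZZ^2 = \emptyset$) having $|P \cap \ZZ^2| \ge 7$ must be contained in a closed strip between two consecutive parallel lattice lines, which forces $S$ onto two such lines.

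The main tool is the lattice width $w(P) = \min_v \bigl(\max_{x \in P} v \cdot x - \min_{x \in P} v \cdot x\bigr)$ over primitive $v \in \ZZ^2$; because the extrema are attained at lattice vertices of $P$, this is a non-negative integer. If $w(P) \le 1$ then $P$ lies in such a strip and we are done. Otherwise $w(P) \ge 2$, and after an $SL_2(\ZZ)$ change of coordinates we may take the minimising direction to be $(0, 1)$, so $P \subseteq \RR \times [0, w]$ with $w := w(P) \ge 2$ and lattice points on both $y = 0$ and $y = w$. For each integer $j \in [0, w]$ write the horizontal slice $P \cap \{y = j\} = [a_j, b_j] \times \{j\}$. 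Convexity of $P$ gives that $a_j$ is convex and $b_j$ is concave in $j$, so $b_j - a_j$ is concave; hollowness forces $(a_j, b_j) \cap \ZZ = \emptyset$ for each interior height $j \in \{1, \dots, w - 1\}$, so each such slice fits in a unit integer interval and contributes at most $2$ lattice points. Applying concavity of $b_j - a_j$ at $j = 1$ and $j = w - 1$ and adding yields the key inequality $(b_0 - a_0) + (b_w - a_w) \le 2$, so at most $4$ lattice points lie on $y = 0$ and $y = w$ combined. In the case $w = 2$ this immediately gives $|P \cap \ZZ^2| \le 4 + 2 = 6$, the bound being attained by the ``double unit triangle'' $\conv\{(0,0),(2,0),(0,2)\}$.

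For $w \ge 3$ the raw slice bound $4 + 2(w - 1)$ is too weak and one must exploit that $w$ is the \emph{minimum} lattice width, so that every primitive direction has width $\ge 2$. The key subclaim is that if a run of consecutive interior slices $y = j_1, \dots, j_2$ with $j_2 - j_1 \ge 1$ each carry two lattice points, then $a_j$ and $b_j = a_j + 1$ are integers along the run, and convexity of $a_j$ together with concavity of $b_j$ force the differences $a_{j+1} - a_j$ to be simultaneously non-decreasing and non-increasing, hence a constant integer $m$. Propagating this common slope $m$ out to the extremal slices $y = 0, w$ via the same convexity/concavity bounds, one checks that every lattice point of $P$ lies in a unit-wide band in the primitive direction $(1, -m)$, contradicting $w(P) \ge 2$. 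Ruling out such runs and combining with $(b_0 - a_0) + (b_w - a_w) \le 2$, a short case check covers the remaining configurations and gives $|P \cap \ZZ^2| \le 6$ throughout. The main obstacle is this last step: leveraging the minimum-width property in a \emph{second} direction to rule out ``fully packed'' interior slices, and verifying that the constant-slope argument propagates cleanly to the extremal slices $y = 0, w$, whose widths are constrained but not forced to vanish.
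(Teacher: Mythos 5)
Your approach is genuinely different from the paper's. The paper colours the points of $S$ by parity to find two of them with a lattice midpoint, affinely normalises so that the line through them is $y = 0$ with those two points an even distance $x_1 \ge 2$ apart, and then argues directly that all points lie on $y = 0, 1, 2$ and that having anything on $y = 2$ caps $|S|$ at $6$. You instead reduce to the lattice width $w(P)$ of the hollow hull and analyse integral slices. The two are kindred in spirit --- the paper's parity trick is precisely a device for producing a good direction in which the width is at most $2$, sidestepping any discussion of \emph{minimum} width --- but your version must pay for that by handling every $w \ge 2$.

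Your treatment of $w \le 2$ is correct and complete: the concavity inequality $(b_0 - a_0) + (b_w - a_w) \le 2$ (derived correctly by combining concavity of $b - a$ at $j = 1$ and $j = w - 1$) caps the extreme slices at $4$ lattice points, and the single interior slice at $2$, giving $6$. The ``run'' subclaim for $w \ge 3$ is also correct as stated: if two \emph{adjacent} interior slices each carry $2$ lattice points, then $a_j$ is affine with integer slope $m$ across the run, convexity of $a$ and concavity of $b$ propagate the unit band in direction $(1, -m)$ to all slices including $j = 0$ and $j = w$, and since $(1, -m)$ is primitive this contradicts $w(P) \ge 2$.

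The gap is everything after that. Ruling out adjacent $2$--$2$ runs plus $(b_0-a_0)+(b_w-a_w)\le 2$ is \emph{not} enough to conclude $|P \cap \ZZ^2| \le 6$ for $w \ge 3$, and the remaining work is not a ``short case check.'' Even after strengthening the run argument to show that \emph{at most one} interior slice carries $2$ lattice points (adjacent runs force integer slope directly; non-adjacent $j_1 < j_2$ with $g_{j_1} = g_{j_2} = 1$ force, by concavity of $g = b - a$ and piecewise-linearity, $g \equiv 1$ and $a$ affine on $[j_1, j_2]$, and hollowness at $j_1 + 1$ then forces the slope to be an integer), you still only get the bound $|P \cap \ZZ^2| \le (\text{extremes}) + 2 + (w - 2) \le 4 + w$, which is $\ge 7$ for every $w \ge 3$ and grows with $w$. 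To finish you must either (i) invoke the flatness bound that a hollow lattice polygon in $\RR^2$ has lattice width at most $2$, which you never state and whose proof is not trivial, or (ii) carry out a genuine analysis of the extreme slices: for instance, one must show the extremes contribute at most $3$ for $w \ge 3$ (e.g.~$(g_0, g_w) = (1,1)$ forces $g \equiv 1$ and an integer slope for $a$, and $(2,0)$ and $(0,2)$ violate $g_1 \le 1$), and then still push down the interior contribution, which requires additional constraints you have not identified. As written, ``ruling out such runs and ... a short case check'' asserts the conclusion without an argument, and I do not believe a short case check exists unless the width bound is proved first as a standalone lemma.

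A secondary, minor point: in the displayed sketch you never verify that the extreme slices $y = 0, w$ actually contain lattice points; this is automatic because the minimum is attained at lattice vertices, but it is used implicitly when you say ``lattice points on both $y = 0$ and $y = w$,'' so it is worth one sentence.
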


\begin{proof}
Suppose the interior of the convex hull does not contain any lattice point. Then $S$ forms a non-strictly convex polygon. 
Colour the points of $S$ with 4 colours based on the parities of their $x$ and $y$-coordinates. Then, by the pigeonhole principle, there are 2 points of the same colour, so their midpoint is a lattice point. Let $l_0$ be the line through these 2 points. By applying an appropriate invertible affine transformation that takes the set of lattice points $\ZZ^2$ to itself, we can assume that $l_0$ is the line $y=0$ and there are two points of $S$ on $l_0$, namely, $P_0=(0,0)$ and $P_1=(x_1,0)$, with $x_1>0$ even. Let $l_m$ be the line $y=m$. Note that there cannot be points of $S$ in both the upper and lower half plane, since otherwise the midpoint of $P_0$ and $P_1$ lies in the interior of the convex hull. So we may assume that all the points of $S$ lie in the upper half plane. There cannot be a point on $l_m$ for $m\geq 3$, since if there is some point $Q\in l_m$, then the triangle $P_0P_1Q$ intersects the line $l_1$ in a segment of length greater than $1$ and so contains an interior point from $l_1$. 
There also cannot be 2 points on $l_2$, since otherwise the trapezium formed by those 2 points together with $P_0$ and $P_1$ intersects $l_1$ in a line segment of length greater than 1 and there will again be an interior point on $l_1$. Finally, if there is a point $Q\in l_2$, then we must have $x_1=2$, since otherwise the triangle $P_0P_1Q$ again intersects the line $l_1$ in a line segment of length greater than 1. But then there can be at most 6 points in $S$: 3 on $l_0$, 2 on $l_1$ and 1 on $l_2$. Thus, there are no points on $l_2$ and all points of $S$ lie on the pair of parallel lines $l_0$ and $l_1$.
\end{proof}

We are now almost ready to prove Theorem~\ref{thm:lattice} in the $d = 2$ case, though we need one more key definition.

\begin{defn}
Given finite sets $A,B\subset \RR^d$, 
a bijection $f:A\to B$ 
is \emph{negligible} if, whenever $S\subset A$ and $x\in A$ with $x\in \Int\conv S$, then $f(x)\in \Int\conv f(S)$. 
\end{defn}

It is not hard to see that for any finite set $A\subset \RR^d$ and any sufficiently small $\varepsilon > 0$ 
every $\varepsilon$-perturbation $f:A\to B$ is negligible. We will use this fact repeatedly in both the proof below and its generalisation to higher dimensions.

\begin{proof}[Proof of Theorem~\ref{thm:lattice} for $d = 2$]
To construct the required set $\cP$, we start with the lattice square $\cP^{(0)}=[n]^2$. Then we shift each column vertically by a small amount to get the set $\cP^{(1)}$, shifting them in such a way that each row of points is Horton, which will also imply that any non-vertical line of lattice points corresponds to a subset of $\cP^{(1)}$ that is Horton. Finally, we shift each row horizontally by an even smaller amount to get $\cP$, so that each column of points is also Horton, though now with respect to projection onto the $y$-axis. In other words, $\cP$ will be a Minkowski sum of two Horton sets, one of them resembling $[n]$ along the $x$-axis, the other along the $y$-axis.

Let $\cP^{(0)}=\setcond{P^{(0)}_{x,y}}{x,y\in [n]}$, where $P^{(0)}_{x,y}=(x,y)$. Define the points $P^{(1)}_{x,y}=P^{(0)}_{x,y}+(x)_\varepsilon\vec{e}_2$ and the set $\cP^{(1)}=\setcond{P^{(1)}_{x,y}}{x,y\in [n]}$. Note that the natural bijection $\cP^{(0)}\to \cP^{(1)}$ is a $2\varepsilon$-perturbation. From Example~\ref{ex:2horton}, we see that for fixed $y$ and small enough $\varepsilon$, the set
$$H=\setcond{P^{(1)}_{x,y}\in \cP^{(1)}}{x\in [n]}$$
is Horton and, hence, so is $H_{a,p}$ for any integers $a$ and $p$ with $p \geq 1$, by Lemma~\ref{lem:subhorton2}. For any non-vertical line $L$, the set
$$\setcond{P^{(1)}_{x,y}\in \cP^{(1)}}{(x,y)\in L\cap [n]^2}$$
is just an affine transformation of $H_{a,p}$ for some $a$ and $p$ 
of the form $(x,y)\mapsto (x,y+cx+d)$ 
and so is also Horton. 

Now, for some $\delta>0$, define the points $P_{x,y}=P^{(1)}_{x,y}+(y)_\delta\vec{e}_1$ and the set $\cP=\{P_{x,y}\mid x,y\in [n]\}$. Here $\delta$ is chosen small enough that the natural bijection $\cP^{(1)}\to \cP$ is negligible and, for any fixed $x$, the set 
$$\Phi\paren{\setcond{P_{x,y}\in \cP}{y\in [n]}}$$
is Horton, where $\Phi:\RR^2\to \RR^2$ is the map that swaps both coordinates. Note that $[n]^2\to \cP$ is a $(2\varepsilon+2\delta)$-perturbation.
Since we can take $\varepsilon$ and $\delta$ arbitrarily small, we may also assume that this map is negligible.
It remains to show that $\cP$ is 7-hole-free.

For any $T\subset\RR^2$, let 
$$\cP_T = \setcond{P_{x,y}\in \cP}{(x,y)\in T\cap [n]^2}.$$
We can similarly define the set $\cP^{(1)}_T$. Now let $S\subset [n]^2$ be of size 7. We wish to show that $\Int\conv \cP_S$ contains a point of $\cP$. By Lemma~\ref{lem:parallel}, either $S$ contains a lattice point in the interior of its convex hull (in which case $\Int\conv \cP_S$ contains a point of $\cP$, since $[n]^2\to \cP$ is negligible) 
or $S$ is contained in 2 parallel lines. Suppose, therefore, that $S$ is contained in 2 parallel lines $L_1$ and $L_2$. If these lines are vertical, then $\Phi(\cP_{L_1})$ and $\Phi(\cP_{L_2})$ are Horton and, for $\delta$ sufficiently small, one of them lies deep below the other, so $\cP_{L_1\cup L_2}$ is 7-hole-free by Lemma~\ref{lem:double7hole}. If instead $L_1$ and $L_2$ are not vertical, then $\cP^{(1)}_{L_1}$ and $\cP^{(1)}_{L_2}$ are Horton and, similarly, $\cP^{(1)}_{L_1\cup L_2}$ is 7-hole-free for $\varepsilon$ sufficiently small. But then, since $\cP^{(1)}\to \cP$ is negligible, $\cP_{L_1\cup L_2}$ is also 7-hole-free. Thus, $\Int\conv \cP_S$ will always contain a point of $\cP$.
\end{proof}

\section{Higher dimensions}

\subsection{Lattice subsets with few points on a hyperplane}

As we have seen in the plane, a set of lattice points does not necessarily have a lattice point in the interior of its convex hull, since they could all lie on a hyperplane. However, if such a set has many points on a hyperplane, then we can instead work within that sublattice of lower dimension. We may therefore assume that no hyperplane contains too many points. We will show below (Lemma~\ref{lem:basic}) that in this case the interior of the set's convex hull contains not just one, but many lattice points.
Unfortunately,  
having a lattice point which is in the interior of the convex hull with respect to the affine space spanned by the set does not guarantee a point in the interior of the convex hull with respect to the whole space $\RR^d$. This problem does not really matter in two dimensions, since the only non-trivial lattice of lower dimension is $\ZZ$ and we already know how to construct Horton sets that look like $\ZZ$ in two (or more) dimensions. In higher dimensions, we solve the problem by showing (Lemma~\ref{lem:well-spread}) that a set of lattice points with few points on any hyperplane must contain ``enough'' lattice points in the interior of its convex hull that when embedded in a higher-dimensional lattice and carefully perturbed, it 
has a point which is in the interior of the convex hull with respect to the larger space.

We will work throughout this section with two notions of cubes, defined as follows.

\begin{defn}
Let $d$ and $r$ be positive integers. A \emph{cube of length $r$} in $\ZZ^d$ is a set of the form
$$\{a+i_1v_1+i_2v_2+\cdots+i_dv_d\mid 0\leq i_1,\ldots,i_d\leq r\}$$
for some $a,v_1,\ldots,v_d\in \ZZ^d$ with $\{v_1,\ldots,v_d\}$ an $\RR$-basis of $\RR^d$. We say that $\{v_1,\ldots,v_d\}$ is the \emph{basis} of the cube and $a$ is the \emph{origin} of the cube. If $\{v_1,\ldots,v_d\}$ is in fact a $\ZZ$-basis of $\ZZ^d$, then we say that $S$ is a \emph{basic cube of length $r$}. 
\end{defn}

The following lemma shows that cubes contain basic cubes in their convex hulls.

\begin{lem}
A cube of length $r$ in $\ZZ^d$ contains a basic cube of length $\lfloor r/d\rfloor$ in its convex hull.
\end{lem}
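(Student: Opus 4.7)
My plan is to first apply a unimodular change of basis of $\ZZ^d$, which preserves the notion of a basic cube, so that the matrix $A = [v_1 \mid \cdots \mid v_d]$ formed by the cube's edge vectors is in upper-triangular Hermite normal form with diagonal entries $h_{ii} \geq 1$. The problem then reduces to finding a basic cube of side $s = \lfloor r/d \rfloor$ inside the parallelepiped $\conv(K) = a + A[0,r]^d$. The case $r < d$ forces $s = 0$ and is trivial (any lattice point in $K$ works), so I assume $r \geq d$.

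Rather than taking the standard basis $e_1, \ldots, e_d$ as the basis of the basic cube (which gives row $\ell^1$-norms of $A^{-1}$ that can be too large), I would construct a sheared basis as the columns of a matrix $W = I + V$, where $V$ is a strictly upper-triangular integer matrix. The entries $V_{ij}$ (for $i < j$) are chosen iteratively: process $j$ from $2$ up to $d$, and within each $j$ process $i$ from $j-1$ down to $1$, picking $V_{ij}$ as the integer minimizing $|(A^{-1}W)_{ij}|$. Because $(A^{-1}W)_{ij}$ depends on $V_{ij}$ through the term $V_{ij}/h_{ii}$ plus a constant determined by previously chosen entries, this rounding yields $|(A^{-1}W)_{ij}| \leq \tfrac{1}{2h_{ii}}$, while the diagonal stays at $(A^{-1}W)_{ii} = 1/h_{ii}$. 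The matrix $W$ is upper-triangular with unit diagonal, so $\det W = 1$, confirming that its columns form a $\ZZ$-basis of $\ZZ^d$ as required.

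Summing these estimates, the row-$i$ $\ell^1$-norm of $A^{-1}W$ is at most $\tfrac{d+2-i}{2h_{ii}} \leq \tfrac{d+1}{2}$. The condition for the resulting basic cube of side $s$ to fit inside the parallelepiped after translation is that $s$ times each row $\ell^1$-norm is at most $r$; since $s \cdot \tfrac{d+1}{2} \leq \tfrac{r}{d} \cdot \tfrac{d+1}{2} \leq r$ for $d \geq 1$, the coordinate-wise bound is satisfied.

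The main obstacle is the integer positioning: one must choose a lattice origin $b \in \ZZ^d$ so that the basic cube actually fits in the parallelepiped. In the $A^{-1}$-coordinates the feasible region for $b$ is a box of strictly positive width in each direction, equal to $r$ minus $s$ times the corresponding row $\ell^1$-norm, and this box must contain a point of the finer lattice $A^{-1}\ZZ^d$. Verifying that this is always possible is the final technical step, following from the fact that the coordinate-wise lattice spacings of $A^{-1}\ZZ^d$ divide $1/h_{ii}$ and are therefore fine enough that the positive-width box always admits a lattice point.
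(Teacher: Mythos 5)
Your overall strategy (put the edge matrix in Hermite normal form, then build an upper-triangular unimodular $W = I + V$ so that $A^{-1}W$ has small entries) is close in spirit to the paper's approach; in fact, when you unwind the paper's construction in HNF coordinates the matrix $[u_1 \mid \cdots \mid u_d]$ is also upper triangular with unit diagonal. The crucial difference is in how the off-diagonal entries of $A^{-1}W$ are normalised. You round to the \emph{nearest} integer, producing entries in $[-\tfrac{1}{2h_{ii}}, \tfrac{1}{2h_{ii}}]$; the paper instead reduces to the half-open interval $[0,1)$. This costs the paper a worse $\ell^1$-row-norm bound ($\le d$ rather than your $\le \tfrac{d+1}{2}$), but it makes every entry of $A^{-1}U$ nonnegative, so the basic cube with origin $a$ (the \emph{same} lattice origin the given cube already has) fits inside $a + A[0,r]^d$ with no further work. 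The positioning problem you flag in your last paragraph simply never arises.

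That last paragraph is a genuine gap. You need a point of the lattice $A^{-1}\ZZ^d$ in a box of width $r - s\,\snorm{\text{row}_i(A^{-1}W)}_1$ in each coordinate, and you argue this is possible because the \emph{coordinate-wise} spacings of $A^{-1}\ZZ^d$ are fine. But $A^{-1}\ZZ^d$ is not a product lattice, so fine one-dimensional projections do not by themselves guarantee a point in a small product box. The natural greedy argument (fix $y_d$, then $y_{d-1}$, etc.) requires each width to be at least $1/h_{ii}$, i.e., $r - s\,\snorm{\text{row}_i}_1 \geq 1/h_{ii}$. Already for $d = 2$, $r = 2$ (so $s = 1$) and $h_{11} = 1$, this asks for $2 - \tfrac{3}{2} \geq 1$, which fails, so you would need a separate, delicate analysis of how the offsets of the $y_1$-fibres shift with $y_2$ to salvage the claim. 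The fix is easy once you see the paper's version: reduce the off-diagonal entries into $[0,1)$ instead of $[-\tfrac{1}{2},\tfrac{1}{2}]$, take $b = a$, and the whole final step disappears.
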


\begin{proof}
Without loss of generality, we may assume that the origin of the cube is 0. Suppose that the basis of the cube is $v_1,\ldots,v_d$ and let $L_k=\ang{v_1,\ldots,v_k}_{\RR}\cap \ZZ^d$ be the $k$-dimensional lattice spanned by the first $k$ elements of the basis, so we have a flag of lattices $L_1\subset L_2\subset\dots\subset L_d=\ZZ^d$. 

For $k=1,\ldots,d$, we inductively pick vectors $u_k\in L_k$ such that $\ang{u_1,\ldots,u_k}_{\ZZ}=L_k$ as follows. To begin, note that there are two possible choices for $u_1$. Pick the one for which $u_1=c_{11}v_1$ for some $c_{11}>0$. Then we also have $c_{11}\leq 1$. For $k>1$, first pick any valid $u_k$ such that $\ang{u_1,\ldots,u_{k-1},u_k}_{\ZZ}=L_k$ and write it as a linear combination $u_k=c_{k1}v_1+\dots+c_{kk}v_k$ with $c_{kk}\neq 0$. If $c_{kk}<0$, replace $u_k$ by $-u_k$, so we may assume that $c_{kk}>0$. By adding integer multiples of $v_1,\ldots,v_{k-1}$ to $u_k$, we may also assume that $0\leq c_{ki}<1$ for $i=1,\ldots,k-1$. Note that $[u_k]$ is a generator of the quotient $L_k/L_{k-1}\cong \ZZ$, so $[v_k]=n[u_k]$ for some integer $n$. Thus, $c_{kk}=1/n\leq 1$.

In the end, we obtain a $\ZZ$-basis $\{u_1,\ldots,u_d\}$ of $\ZZ^d$ such that $u_k=c_{k1}v_1+\dots+c_{kd}v_d$ and $0\leq c_{ki}\leq 1$ for all $1 \leq k, i \leq d$. Hence, the basic cube of length $\floor{r/d}$ with basis $\{u_1,\ldots,u_d\}$ and origin 0 lies in the convex hull of the cube of length $r$ with basis $\{v_1,\ldots,v_d\}$.
\end{proof}

Our next lemma, already referenced in the discussion above, may be seen as a higher-dimensional analogue of Lemma~\ref{lem:parallel}. It is a Ramsey-type statement showing that, for any sufficiently large set of lattice points in $\RR^d$, either many of them lie on a common hyperplane or the interior of the set's convex hull contains a basic cube. 

\begin{lem}
\label{lem:basic}
Let $d$, $r$ and $m$ be positive integers with $m > d \geq 2$. Then, for any integer $N \geq  m(r+1)^d d^{2d}$ and any set of lattice points $S\subset \ZZ^d$ of size $N$, either there is a hyperplane containing at least $m$ points of $S$ or the interior of the convex hull of $S$ contains a basic cube of length $r$. 
\end{lem}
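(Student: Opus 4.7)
My plan is to reduce, via the preceding lemma, to finding a cube of length $rd$ in the interior of $\conv(S)$, since such a cube contains a basic cube of length $\lfloor rd/d\rfloor=r$ in its convex hull.

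The key consequence of the no-hyperplane hypothesis is a \emph{spread property}: for every primitive lattice vector $w\in\ZZ^d\setminus\{0\}$, the inner products $\langle s,w\rangle$ for $s\in S$ take at least $\lceil N/(m-1)\rceil$ distinct integer values, since otherwise some lattice hyperplane $\{x:\langle x,w\rangle=c\}$ would contain $\geq m$ points of $S$ by pigeonhole. Applied to $w=e_i$, the bounding box of $S$ has side length $L:=\lceil N/(m-1)\rceil-1\geq (r+1)^d d^{2d-1}$ in each coordinate direction, and more generally $S$ has large spread in every primitive lattice direction simultaneously.

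I would then inductively choose affinely independent points $p_0,p_1,\ldots,p_d\in S$ together with primitive dual vectors $\hat w_1,\ldots,\hat w_d\in(\ZZ^d)^*$ such that each $\hat w_k$ vanishes on $p_j-p_0$ for $j<k$; in particular $\hat w_1,\ldots,\hat w_d$ form an $\RR$-basis of the dual space. At step $k$, the space of admissible $\hat w_k$ is a rank-$(d-k+1)$ sublattice of $(\ZZ^d)^*$, from which a standard Minkowski-type argument extracts a primitive vector of small Euclidean norm. The spread property applied to $\hat w_k$ then provides a point $p_k\in S$ with $\hat w_k(p_k-p_0)$ of order $L$, which translates---via the norm bound on $\hat w_k$---into a large Euclidean height of $p_k$ above the affine span $\mathrm{aff}(p_0,\ldots,p_{k-1})$.

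With this ``fat'' $d$-simplex $\Delta=\conv(p_0,\ldots,p_d)$ in hand, I would check that its inradius is much larger than $(r+1)\sqrt{d}$, so that its interior contains a closed axis-aligned lattice box of side $rd$ (after a small translation to land on an integer corner near the incenter). This yields a cube of length $rd$ in $\Int\conv(S)\supset\Int\Delta$, to which the preceding lemma applies. The principal obstacle is tracking Euclidean norms and heights through the iterative construction: converting the combinatorial spread bound into a geometric height bound requires simultaneously controlling the norm of each $\hat w_k$ via Minkowski bounds on the orthogonal sublattice, and the $d^{2d}$ slack in the hypothesis is precisely what absorbs the accumulated geometric conversions together with the rounding needed to force the box origin onto the integer lattice.
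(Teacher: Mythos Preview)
Your approach differs substantially from the paper's, and the inductive Minkowski step has a genuine gap.

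The paper avoids Euclidean geometry altogether: set $p = rd^2+2d$ so that $N \ge mp^d$, pigeonhole $S$ by residue class in $(\ZZ/p\ZZ)^d$ to obtain at least $m$ points $T \subset S$ whose pairwise differences are all divisible by $p$, and use the no-hyperplane hypothesis to extract $d+1$ affinely independent points $a,\,a+pv_1,\ldots,a+pv_d$ from $T$. The open simplex on these points contains the cube $\{a+i_1 v_1+\cdots+i_d v_d : 1 \le i_j < p/d\}$ of length at least $rd$, and the preceding lemma finishes. The bound $m(r+1)^d d^{2d}$ is exactly $mp^d$ under $p\le (r+1)d^2$, with no room to spare.

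In your scheme, once $p_0,\ldots,p_{k-1}$ are fixed, the admissible $\hat w_k$'s form the lattice $\Lambda_k^\perp=\bigl(\mathrm{span}(p_1-p_0,\ldots,p_{k-1}-p_0)\bigr)^\perp\cap\ZZ^d$, whose covolume equals that of the saturated lattice in $\mathrm{span}(p_1-p_0,\ldots,p_{k-1}-p_0)$. You have chosen each $p_j-p_0$ to have large inner product with a single $\hat w_j$ but exert no control over its remaining coordinates, so this covolume can be of order $L^{k-1}$; Minkowski then only yields $\|\hat w_k\|\le c_d\, L^{(k-1)/(d-k+1)}$, and at the final step $k=d$ the height $|\hat w_d(p_d-p_0)|/\|\hat w_d\|$ can collapse to order $L^{2-d}$. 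Concretely in $\RR^3$: with $\hat w_1=e_1$ nothing prevents the algorithm from selecting $p_1-p_0=(L,1,0)$; the shortest $\hat w_2\in\Lambda_2^\perp$ is then $e_3$, and $p_2-p_0=(1,b,L)$ is a legitimate next choice; but now the unique primitive $\hat w_3$ is $\pm(L,-L^2,Lb-1)$, of norm at least $L^2$, while the spread property only guarantees $|\hat w_3(p_3-p_0)|\ge (L-1)/2$, giving a height of order $1/L$. The $d^{2d}$ slack is calibrated to $p^d$ in the mod-$p$ argument and cannot absorb losses that are polynomial in $L$. A second, independent issue: your quantities $\mathrm{dist}\bigl(p_k,\mathrm{aff}(p_0,\ldots,p_{k-1})\bigr)$ are distances to $(k-1)$-flats and only \emph{dominate} the distances from $p_k$ to its opposite facet, so even if all of them were large you would still have no direct lower bound on the inradius of $\conv(p_0,\ldots,p_d)$.
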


\begin{proof}
Let $p= r d^2 + 2d$, so that $N \geq mp^d$. Assume that no hyperplane contains $m$ points of $S$. Look at the coordinates of each point of $S$ modulo $p$. Since there are only $p^d$ different choices of congruence class over all $d$ coordinates, the pigeonhole principle implies that there is a subset $T\subset S$ of size at least $N/p^d$ such that, for any two points $u_1,u_2\in T$, the difference $u_1-u_2$ has all coordinates divisible by $p$. Since $N/p^d\geq m$, not all points of $T$ lie on a hyperplane. In other words, $d+1$ of them lie in general position and so are of the form $a,a+pv_1,a+pv_2,\ldots,a+pv_d$ for some $a,v_1,\ldots,v_d\in \ZZ^d$ such that $\{v_1,\ldots,v_d\}$ is an $\RR$-basis. $\Int\conv T$ thus contains the points $a+i_1v_1+\dots+i_dv_d$ for each $1\leq i_1,\ldots,i_d<p/d$, so we have a cube of length at least $p/d-2$. By the previous lemma, we then obtain a basic cube of length $\lfloor\frac{p/d-2}{d}\rfloor\geq r$ in the interior of $\conv T$.
\end{proof}

As mentioned before, simply knowing that a lattice subset contains a single lattice point in the interior of its convex hull is not enough and we instead require it to contain ``enough'' lattice points. Here we make precise what it means to be ``enough''.

\begin{defn}
Let $N$ and $r$ be positive integers and $L\subset \RR^d$ a levelled set. We say that $H\subset L$ is \emph{$(N,r)$-spread in $L$} if, for any $S\subset H$ with at least $N$ points and any integer $a$, $\Int\conv S$ contains a point of $L_{a,r}$. We say that $H\subset L$ is \emph{$(N,r)$-well-spread in $L$} if, for any integers $a$ and $p$ with $p\geq 1$, the set $H_{a,p}$ is $(N,r)$-spread in $L_{a,p}$.
\end{defn}

To get some feel for this definition, we note that saying that $L$ is $(N,1)$-spread in itself is the same as saying that $L$ is $N$-hole-free. Moreover, if the level map is just projection onto the $x$-axis, then $H$ is $(N,r)$-spread in $L$ if any subset of $H$ of size $N$ contains points of $L$ in the interior of its convex hull with any $x$-coordinate modulo $r$. 

Our next lemma is the promised result showing that any sufficiently large set of lattice points in $\RR^d$ either has many points on a hyperplane or ``enough'' points in  the interior of its convex hull. For us, this latter statement will mean that the point set is well-spread in the lattice.
Note that any lattice $L\subset\RR^d$ with $L\cong \ZZ^d$ can be viewed as a levelled set whose level map is projection onto the span of one of the $\ZZ$-basis elements of $L$.

\begin{lem}
\label{lem:well-spread}
Let $d$, $r$ and $m$ be positive integers with $m > d \geq 2$. Then, for any integer $N \geq m(r+1)^d d^{2d}$ and any $d$-dimensional lattice $L\subset\RR^d$, any $H\subset L$ either has at least $m$ points on a hyperplane or is $(N,r)$-well-spread in $L$. 
\end{lem}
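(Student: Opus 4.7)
The plan is to apply Lemma~\ref{lem:basic} inside the affine sublattice $L_{a,p}$ (rather than in the whole lattice $L$) and then to extract from the resulting basic cube a point whose level has the desired residue modulo $rp$. Throughout, suppose $H$ has no $m$ points on any hyperplane; I aim to show $H$ is $(N,r)$-well-spread.

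Fix integers $a$ and $p \geq 1$, a subset $S \subset H_{a,p}$ with $\sabs{S} \geq N$, and an integer $a'$. We must find a point of $(L_{a,p})_{a', r}$ in $\Int\conv S$. After applying an $\RR$-linear automorphism of $\RR^d$ sending a $\ZZ$-basis of $L$ to the standard basis, we may assume $L = \ZZ^d$ and $\phi_L = \pi_1$. Then $L_{a,p}$ is the translate by $a\vec{e}_1$ of the sublattice $M = \setcond{x \in \ZZ^d}{\pi_1(x) \equiv 0 \pmod p}$, which is abstractly isomorphic to $\ZZ^d$. Picking a $\ZZ$-linear isomorphism $\psi : \ZZ^d \to M$ (extended $\RR$-linearly to $\RR^d$) and setting $S' = \psi^{-1}(S - a\vec{e}_1)$, I note that $S'$ has at most $m-1$ points on any hyperplane and $\sabs{S'} \geq N$, so Lemma~\ref{lem:basic} produces a basic cube of length $r$ inside $\Int\conv S'$. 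Pushing it forward by $\psi$ and translating by $a\vec{e}_1$ yields a set $C = \setcond{a\vec{e}_1 + a_0 + i_1 v_1 + \cdots + i_d v_d}{0 \leq i_1, \ldots, i_d \leq r} \subset \Int\conv S$, with $a_0 \in M$ and $\{v_1, \ldots, v_d\}$ a $\ZZ$-basis of $M$.

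The remaining task is to find a point of $C$ whose $\pi_1$-value is $\equiv a + a'p \pmod{rp}$. Writing $\pi_1(v_j) = p w_j$ and $\pi_1(a_0) = p b_0$ (valid since $a_0, v_j \in M$), the condition reduces to $\sum_j i_j w_j \equiv a' - b_0 \pmod r$ for some tuple $(i_1, \ldots, i_d) \in \{0, 1, \ldots, r\}^d$. Here I would exploit that $p\vec{e}_1 \in M$: expanding $p\vec{e}_1 = \sum_j c_j v_j$ with $c_j \in \ZZ$ and taking $\pi_1$ gives $\sum_j c_j w_j = 1$, so $\gcd(w_1, \ldots, w_d) = 1$. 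Consequently, $(i_1, \ldots, i_d) \mapsto \sum_j i_j w_j$ is surjective from $\ZZ^d$ onto $\ZZ/r\ZZ$, and any integer preimage can be reduced coordinatewise modulo $r$ (without affecting the residue) to land in $\{0, 1, \ldots, r-1\}^d$, which is contained in $\{0, 1, \ldots, r\}^d$.

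The hard part will be this last step, which is the entire reason for the ``well-spread'' refinement of ``spread''. Applying Lemma~\ref{lem:basic} directly to $S \subset \ZZ^d$ would give a basic cube of $\ZZ^d$ whose $(r+1)^d$ points have $\pi_1$-residues modulo $rp$ that need not cover $\ZZ/rp\ZZ$, since $rp$ can be arbitrarily large. The trick is to work \emph{inside} the affine sublattice $L_{a,p}$: the cube then sits in $M$, so its basis has $\pi_1$-projections all divisible by $p$, and dividing out by $p$ leaves $\gcd = 1$ thanks to $p\vec{e}_1 \in M$, which is precisely what allows $(r+1)^d$ points to cover all $r$ residues modulo $r$.
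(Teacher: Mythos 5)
Your proposal is correct and follows essentially the same route as the paper: pass to the sublattice $L_{a,p}$, apply Lemma~\ref{lem:basic} there to obtain a basic cube in $\Int\conv S$, and use the $\ZZ$-basis property of the cube to adjust coefficients modulo $r$ and hit the target residue class. The only cosmetic difference is that the paper first re-levels $L_{a,p}$ (replacing $\phi_L$ by $(\phi_L-a)/p$) so that the final congruence is immediate, whereas you keep the original $\pi_1$ and verify surjectivity modulo $r$ via $\gcd(w_1,\ldots,w_d)=1$.
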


\begin{proof}
Suppose $H\subset L$ has no $m$ points on any hyperplane. Then, for any $a$ and $p$ with $p\geq 1$, the set $H_{a,p}$ also has no $m$ points on any hyperplane. Hence, without loss of generality, it suffices to show that $H$ is $(N,r)$-spread in $L$. 

By applying an invertible affine transformation, we may assume that $L$ is $\ZZ^d$ and its level map is simply the projection $\pi_1$ onto the first coordinate. 
Without loss of generality, we may also assume that $H\subset L=\ZZ^d$ has size $N$. We wish to show that $\Int\conv H$ contains a point of $L_{a,r}$ for any integer $a$. By Lemma~\ref{lem:basic}, $\Int\conv H$ contains a basic cube of length $r$ with basis $v_1,\ldots,v_d$ and origin $v$. Let $x'\in\ZZ^d$ be any lattice point with $\pi_1(x')\equiv a\pmod{r}$. We can write $x'=v+c_1'v_1+\dots+c_d'v_d$ for some integers $c_1',\ldots,c_d'$. Set $x=v+c_1v_1+\dots+c_dv_d$, where $c_i\equiv c_i'\pmod{r}$ and $0\leq c_i<r$ for each $i$. Then $\pi_1(x)\equiv a\pmod{r}$ and $x$ is in the basic cube, which is itself contained in $\Int\conv H$.
\end{proof}

\subsection{Horton sets}

In this section, building on Valtr's work~\cite{V92}, we define Horton sets in higher dimensions and note some of their properties. We first generalise to $\RR^d$ what it means for a finite set $A$ to lie high above or deep below another finite set $B$.
For $k\leq d$, we write $\pi_{[k]}:\RR^d\to\RR^k$ for projection onto the first $k$ coordinates. 

\begin{defn}
Let $S,T\subset\RR^d$ be non-empty finite sets. We say that the pair $(S,T)$ is \emph{generic} if
\begin{enumerate}
\item $\pi_{[d-1]}$ is injective on $S$ and on $T$,
\item the affine subspaces spanned by $\pi_{[d-1]}(S)$ and $\pi_{[d-1]}(T)$ intersect at a unique point.
\end{enumerate}
\end{defn}

\begin{defn}
For a generic pair $(S,T)$, we say that \emph{$S$ lies above $T$} and that \emph{$T$ lies below $S$} if the unique pair of points $(s,t)$ with $s$ in the affine subspace spanned by $S$ and $t$ in the affine subspace spanned by $T$ satisfying $\pi_{[d-1]}(s)=\pi_{[d-1]}(t)$ also satisfies $\pi_d(s)>\pi_d(t)$. 
\end{defn}

\begin{defn} \label{def:dha}
For finite sets $A,B\subset\RR^d$, we say that \emph{$A$ lies high above $B$} and \emph{$B$ lies deep below $A$} if, for any generic pair $(S,T)$ with $S\subset A$ and $T\subset B$, $S$ lies above $T$. 
\end{defn}

Notice that for any two finite sets $A,B\subset \RR^d$, we can make $A$ lie high above $B$ by translating $A$ sufficiently high in the $d$-th coordinate. Indeed, for any generic pair $(S,T)$ with $S\subseteq A$ and $T\subseteq B$ (for which there are only finitely many choices), $S$ can be made to lie above $T$ by taking any sufficiently large translate of $S$ in the $d$-th direction.

For later use, we now wish to record a lemma that gives conditions under which we can deduce that a  point lies in the interior of the convex hull of a set. This will follow as a simple corollary of the next result.

\begin{lem}
\label{lem:above}
Let $S_1,S_2\subset\RR^d$ be non-empty sets such that $S_1$ lies deep below (or high above) $S_2$. Suppose there is a point $p\in S_1$ such that $\pi_{[d-1]}(p)\in\Int\conv(\pi_{[d-1]}(S_1\cup S_2))$. Then there is a point $q\in\conv(S_1\cup S_2)$ such that $\pi_{[d-1]}(q)=\pi_{[d-1]}(p)$ and $\pi_d(q)>\pi_d(p)$ (resp., $\pi_d(q)<\pi_d(p)$).
\end{lem}

\begin{proof}
Assume that $S_1$ lies deep below $S_2$. Pick any $r\in S_2$, so $r\neq p$. If $\pi_{[d-1]}(r)=\pi_{[d-1]}(p)$, then $(\set{r},\set{p})$ is a generic pair, so $\set{r}$ lies above $\set{p}$. This implies that $\pi_d(r)>\pi_d(p)$, so we can just pick $q=r$. Otherwise, let the open ray from $\pi_{[d-1]}(r)$ through $\pi_{[d-1]}(p)$ intersect the boundary of $\conv(\pi_{[d-1]}(S_1\cup S_2))$ at $x$. 
Such a point exists and is unique since $\pi_{[d-1]}(p)$ lies in the interior of $\conv(\pi_{[d-1]}(S_1\cup S_2))$. $x$ belongs to some face of the convex hull of $\pi_{[d-1]}(S_1\cup S_2)$, so it lies in $\conv(\pi_{[d-1]}(F))$ for some $F\subset S_1\cup S_2$ with $|F|=d-1$ and the affine span of $\pi_{[d-1]}(F)$ has dimension $d-2$. Furthermore, the affine span of $\pi_{[d-1]}(F)$ does not contain $\pi_{[d-1]}(p)$ or $\pi_{[d-1]}(r)$, so that $\pi_{[d-1]}(F\cup\{p\})$ and $\pi_{[d-1]}(F\cup\{r\})$ are in general position and each spans the entire space $\RR^{d-1}$ affinely. 
We also have that $\pi_{[d-1]}(p)\in\conv(\pi_{[d-1]}(F\cup\{r\}))$.

Set $S=(F\cup\{p\})\cap S_1$ and $T=(F\cup\{r\})\cap S_2$. We claim that $(S,T)$ is a generic pair. Note that $|S|+|T|=d+1$ and the dimension of the affine spans of $\pi_{[d-1]}(S)$ and $\pi_{[d-1]}(T)$ are $|S|-1$ and $|T|-1$, respectively. It will suffice to show that the affine spans of $\pi_{[d-1]}(S)$ and $\pi_{[d-1]}(T)$ intersect. Indeed, if the intersection is not unique, then it is an affine space of some dimension $d'\geq 1$, which implies that the dimension of the affine span of $\pi_{[d-1]}(S\cup T)=\pi_{[d-1]}(F\cup\{p,r\})$ is $|S|-1+|T|-1-d'\leq d-2$. But this would contradict the fact that $\pi_{[d-1]}(F\cup\{p,r\})$ affinely spans the entire space $\RR^{d-1}$.

Since $\pi_{[d-1]}(p)\in\conv(\pi_{[d-1]}(F\cup\{r\}))$, we can write
$$\pi_{[d-1]}(p)=\sum_{v\in F\cup\{r\}}c_v \pi_{[d-1]}(v)$$
for some $c_v\in [0,1]$ with $\sum_v c_v=1$. Since the affine span of $\pi_{[d-1]}(F)$ does not contain $\pi_{[d-1]}(p)$, we must have $c_r>0$. Then
$$\pi_{[d-1]}(p)-\sum_{v\in S-\{p\}}c_v \pi_{[d-1]}(v)=\sum_{v\in T}c_v \pi_{[d-1]}(v),$$
so that the affine spans of $\pi_{[d-1]}(S)$ and $\pi_{[d-1]}(T)$ intersect at the common point
$$\frac{1}{1-\sum_{v\in S-\{p\}}c_v}\paren{\pi_{[d-1]}(p)-\sum_{v\in S-\{p\}}c_v \pi_{[d-1]}(v)}=\frac{1}{\sum_{v\in T}c_v}\paren{\sum_{v\in T}c_v \pi_{[d-1]}(v)}.$$
Hence, $(S,T)$ is a generic pair, so that $T$ lies above $S$. We let $s$ and $t$ be the unique pair of points in the affine subspaces spanned by $S$ and $T$ satisfying $\pi_{[d-1]}(s)=\pi_{[d-1]}(t)$ and $\pi_d(t)>\pi_d(s)$. In other words,
$$s=\frac{1}{\sum_{v\in T}c_v}\paren{p-\sum_{v\in S-\{p\}}c_v v},\quad t=\frac{1}{\sum_{v\in T}c_v}\paren{\sum_{v\in T}c_v v}.$$

If $\sum_{v\in S-\{p\}}c_v=0$, then $p=s$ and we may take $q=t$ as our desired point. 
Otherwise, let 
$$u=\frac{\sum_{v\in S-\{p\}}c_v v}{\sum_{v\in S-\{p\}}c_v}.$$
Then $p=(\sum_{v\in T}c_v)s+(\sum_{v\in S-\{p\}}c_v)u$, so $p$ lies on the line segment $su$. 
Hence, by setting
$$q=\paren{\sum_{v\in T}c_v}t+\paren{\sum_{v\in S-\{p\}}c_v}u,$$
which is on the line segment $tu$, we get $\pi_{[d-1]}(q)=\pi_{[d-1]}(p)$ and $\pi_d(q)>\pi_d(p)$.
\end{proof}

\begin{lem}
\label{lem:uphull}
Let $S_0,S_1,S_2\subset \RR^d$ be non-empty sets such that $S_1$ lies deep below $S_2$ and $S_0$ lies deep below $S_1\cup S_2$. If there is a point $p\in S_1$ such that $\pi_{[d-1]}(p)\in \Int\conv (\pi_{[d-1]}(S_1\cup S_2))$, then $p\in \Int\conv (S_0\cup S_1\cup S_2)$.
\end{lem}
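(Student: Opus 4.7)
The plan is to apply Lemma~\ref{lem:above} with an appropriately chosen $d$-element subset $S'\subset S_1\cup S_2$ and a point $q\in S_0$. Since $\pi_{[d-1]}(p)\in\Int\conv(\pi_{[d-1]}(S_1\cup S_2))$, Carath\'eodory's theorem lets us choose $S'=\{s_1,\ldots,s_d\}$ with affinely independent projections such that $\pi_{[d-1]}(p)\in\Int\conv(\pi_{[d-1]}(S'))$; moreover, we take $S'$ to be the vertex set of an upper facet of $\conv(S_1\cup S_2)$, so that the hyperplane $H$ through $S'$ supports $\conv(S_1\cup S_2)$ from above. In particular, $p$ lies on or below $H$.

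The crux of the proof is to rule out $p\in H$. Suppose otherwise. Since $\pi_{[d-1]}(p)$ lies in the interior of the projected facet and $p$ lies on its affine hull, $p$ is in the relative interior of the facet and is therefore a strict convex combination of $S'$. Split $S'=S'_1\sqcup S'_2$ according to membership in $S_1$ or $S_2$. The set $S'_2$ is non-empty, for otherwise the upper facet consists entirely of $S_1$-points, and the hypothesis $S_1$ deep below $S_2$ would force any $s\in S_2$ to lie strictly above $H$, contradicting that $H$ supports $\conv(S_1\cup S_2)$ from above. Now apply the hypothesis $S_1$ deep below $S_2$ to the generic pair $(S'_2,S'_1\cup\{p\})$ (or to $(S',\{p\})$ when $S'_1=\emptyset$), of total size $d+1$: this yields that $S'_2$ lies above $S'_1\cup\{p\}$. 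However, since $p\in H$, the affine spans of both $S'_2$ and $S'_1\cup\{p\}$ are contained in $H$, so the unique witness pair $(s,t)$ from Definition~\ref{def:dha} satisfies $\pi_d(s)=\pi_d(t)$, contradicting the strict inequality required. Hence $p$ lies strictly below $H$.

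It now suffices to pick any $q\in S_0$. As $S_0$ lies deep below $S_1\cup S_2\supseteq S'\cup\{p\}$, the singleton $\{q\}$ lies deep below $S'\cup\{p\}$, and Lemma~\ref{lem:above} gives $p\in\Int\conv(S'\cup\{q\})\subseteq\Int\conv(S_0\cup S_1\cup S_2)$, as required. The main difficulty lies in the middle step: ensuring that a suitable upper facet exists with $\pi_{[d-1]}(p)$ strictly inside its projection, and carefully verifying the genericity of the pair $(S'_2,S'_1\cup\{p\})$. Degenerate configurations---such as $\pi_{[d-1]}(p)$ landing on the shared projected boundary between two upper facets---may require a small perturbation or a separate dimensional analysis.
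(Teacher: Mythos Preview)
Your overall strategy matches the paper's: reduce to Lemma~\ref{lem:above} by finding a $d$-subset of $S_1\cup S_2$ whose projected simplex contains $\pi_{[d-1]}(p)$ in its interior and such that $p$ lies strictly below the hyperplane through that subset, then finish with a point of $S_0$. The difference is in how you obtain ``strictly below''. You try to force it by insisting that $S'$ be an upper facet of $\conv(S_1\cup S_2)$; the paper instead takes an arbitrary Carath\'eodory simplex $T$ and handles every possible position of $p$ relative to its hyperplane by cases. (Your worry about genericity of $(S'_2,S'_1\cup\{p\})$ is, by contrast, not an issue: once $\pi_{[d-1]}(p)\in\Int\conv\pi_{[d-1]}(S')$, every partition of $S'\cup\{p\}$ is generic, as the paper notes in one line.)

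The gap you flag at the end is real and is the heart of the matter. There is no reason an upper facet with exactly $d$ vertices whose projection contains $\pi_{[d-1]}(p)$ in its \emph{interior} should exist: $\conv(S_1\cup S_2)$ need not be simplicial, and even when it is, $\pi_{[d-1]}(p)$ may sit on the common boundary of several projected upper facets. Perturbation does not rescue this, because $S_0,S_1,S_2$ and $p$ are fixed by hypothesis and the conclusion $p\in\Int\conv(\cdots)$ is an open condition that need not survive a limit. So the argument is incomplete exactly where you say it is.

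The paper's proof avoids this entirely. It picks any $T\subset S_1\cup S_2$ of size $d$ with $\pi_{[d-1]}(p)\in\Int\conv\pi_{[d-1]}(T)$ (no facet condition) and splits into cases. If $p$ is strictly below the hyperplane of $T$, Lemma~\ref{lem:above} with $q\in S_0$ finishes. Otherwise set $T_i=T\cap S_i$. When $T_2\neq\emptyset$, a short continuity argument (translate $T_2$ upward along $\vec{e}_d$) shows $p$ cannot lie on or above $T$'s hyperplane, a contradiction. When $T_2=\emptyset$, one uses a point of $S_2$ instead of $S_0$: if $p$ is strictly above, apply Lemma~\ref{lem:above} with $q\in S_2$; if $p$ is on the hyperplane, sandwich with one point of $S_2$ above and one of $S_0$ below. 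This last case, $T\subset S_1$, is precisely what your upper-facet maneuver and your ``$S'_2\neq\emptyset$'' argument were meant to exclude; the paper simply handles it directly rather than trying to rule it out.
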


\begin{proof}
Applying Lemma~\ref{lem:above} to $S_1,S_2$, we get a point $q_2\in\conv(S_1\cup S_2)$ with $\pi_{[d-1]}(q_2)=\pi_{[d-1]}(p)$ and $\pi_d(q_2)>\pi_d(p)$. Applying Lemma~\ref{lem:above} again to $S_1\cup S_2,S_0$, we get a point $q_1\in\conv(S_0\cup S_1\cup S_2)$ with $\pi_{[d-1]}(q_1)=\pi_{[d-1]}(p)$ and $\pi_d(q_1)<\pi_d(p)$. It follows that $p\in \Int\conv (S_0\cup S_1\cup S_2)$.
\end{proof}

We now come to our definition of higher-dimensional Horton sets. For the remainder of this section, we fix a sequence $p_2,p_3,p_4,\ldots$ of prime numbers.

\begin{defn}
\label{def:horton}
For $k \leq d$, we say that a set $H\subset\RR^d$ is \emph{$(d,k)$-Horton with respect to the sequence $p_{k+1},\ldots,p_d$} (though we will just say $(d,k)$-Horton if the sequence is clear from context) if $H$ is a finite levelled set whose level map is of the form $\phi_H=a\pi_1+b$ for some $a,b\in\RR$ with $a\neq 0$, $\pi_{[k]}$ is injective on $H$ and, assuming these conditions hold throughout, $H$ is $(d,k)$-Horton according to a finite number of applications of the following rules:
\begin{enumerate}
\item If $H \subset \RR^k$, then $H$ is $(k,k)$-Horton.
\item If $|\pi_1(H)|\leq 1$, then $H$ is $(d,k)$-Horton.
\item If $d>k$ and $H$ satisfies the following:
\begin{enumerate}
\item $\pi_{[d-1]}(H)$ is $(d-1,k)$-Horton,
\item the sets $H_{i,p_d}$ are $(d,k)$-Horton for $i=0,1,\ldots,p_d-1$,
\item any index set $I\subset\set{0,1,\ldots,p_d-1}$ with $|I|\geq 2$ can be decomposed into a pair of non-empty sets $(J, I-J)$ such that the set $\bigcup_{i\in J} H_{i,p_d}$ lies deep below the set $\bigcup_{i\in I-J} H_{i,p_d}$,
\end{enumerate}
then the set $H$ is $(d,k)$-Horton.
\end{enumerate}
\end{defn}

Notice that if $H$ is $(d,k)$-Horton, then, for $\varepsilon$ sufficiently small, any $\varepsilon$-perturbation $H'$ of $H$ preserving the first coordinate is also $(d,k)$-Horton. 

\begin{rmk}
This definition does not generalise our previous definition of a Horton set in dimension 2. Indeed, Definition~\ref{def:2horton} required that $\phi_H(H)$ be a consecutive set of integers, but our definition of a $(d,k)$-Horton set does not. However, if a set $H$ is $(2,1)$-Horton with respect to the sequence $p_2=2$ and satisfies this property, then it is Horton as previously defined.
\end{rmk}

Though we draw heavily on Valtr's definition~\cite[Definition 5.1]{V92}, 
our definition of higher-dimensional Horton sets differs from his in several ways. Firstly, just like in the plane, it is defined in terms of levelled sets and so, in that way, is less general. However, our definition is more general in some other ways. For one thing, we do not require that the set be in strongly general position, which, in particular, means that there may be points with the same $x$-coordinate. This is essential for us, as we will be studying Horton sets that resemble sets of lattice points. Our definition also has a new parameter $k$, which allows us to build Horton sets starting from a $k$-dimensional set. The definition of a $d$-Horton set in \cite{V92} is instead akin to a $(d,1)$-Horton set.

We now generalise Lemma~\ref{lem:subhorton2} to higher dimensions. We will use a notion of \emph{complexity} for $(d, k)$-Horton sets $H$, where those $H$ with $|\pi_1(H)|\leq 1$ have complexity $0$ and $H$ has complexity $c$ if the sets $H_{i,p_d}$ in condition 3(b) each have complexity at most $c-1$ and at least one of them has complexity $c-1$.

\begin{lem} \label{lem:subhorton}
If $k\leq d$ and $H\subset \RR^d$ is $(d,k)$-Horton, then, for any integers $a$ and $p$ with $p\geq 1$, the set $H_{a,p}$ is also $(d,k)$-Horton.
\end{lem}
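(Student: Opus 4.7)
The plan is to prove this by induction on the complexity of $H$, with an outer induction on $d - k$ to accommodate the recursive appeal to condition~3(a) of the definition. The base cases dispose of the trivial situations: if $d = k$, then $H$ being $(k,k)$-Horton just means it is a finite levelled set with level map of the form $a'\pi_1 + b'$, and $H_{a,p}$ inherits the same form so rule~1 applies; if instead $d > k$ and $H$ has complexity $0$, then $|\pi_1(H)| \leq 1$, whence $|\pi_1(H_{a,p})| \leq 1$ and rule~2 applies.

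For the inductive step, take $d > k$ and $H$ of complexity $c \geq 1$, so that $H$ satisfies rule~3, built from sets $H_{0,p_d}, \dots, H_{p_d-1,p_d}$ each of complexity at most $c - 1$. The crucial computation is that $x \in (H_{a,p})_{i,p_d}$ if and only if $\phi_H(x) \equiv a + ip \pmod{p\,p_d}$, so that $(H_{a,p})_{i,p_d} = H_{a+ip,\, p\,p_d}$; this set can in turn be rewritten as a sub-lattice $(H_{j,p_d})_{a',\, p}$ of $H_{j,p_d}$, where $j = (a + ip) \bmod p_d$. I then split on whether $p_d \mid p$. If $p_d \mid p$, all residues $j$ coincide, so $H_{a,p}$ sits inside a single $H_{j,p_d}$ as the sub-lattice $(H_{j,p_d})_{a'',\,p/p_d}$; since $H_{j,p_d}$ has complexity at most $c-1$, the inductive hypothesis closes the case immediately.

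If instead $\gcd(p, p_d) = 1$, I verify rule~3 for $H_{a,p}$ directly, reducing to rule~2 first if $|\pi_1(H_{a,p})| \leq 1$. For condition (a), the identity $\pi_{[d-1]}(H_{a,p}) = \pi_{[d-1]}(H)_{a,p}$ (which holds because $\phi_H$ depends only on $\pi_1$) allows the outer induction to be invoked on the $(d-1,k)$-Horton set $\pi_{[d-1]}(H)$. For condition (b), the reinterpretation above exhibits each $(H_{a,p})_{i,p_d}$ as a sub-lattice of the smaller-complexity set $H_{j,p_d}$, and the inner inductive hypothesis finishes the job. For condition (c), coprimality ensures that $i \mapsto (a + ip) \bmod p_d$ is a bijection on $\{0,\dots,p_d-1\}$, so any index set $I$ of size $\geq 2$ has image $I'$ of the same size; the decomposition $(J',\, I' - J')$ guaranteed for $H$ pulls back along this bijection to a valid decomposition $(J,\, I - J)$ for $H_{a,p}$.

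The main obstacle is the bookkeeping in this last case, particularly verifying that the pulled-back decomposition retains the ``lies deep below'' property. This reduces to the elementary observation that if $A' \subset A$ and $B' \subset B$ with $A$ lying deep below $B$, then $A'$ lies deep below $B'$, which is immediate from Definition~\ref{def:dha} since any generic pair drawn from $A'$ and $B'$ is \textit{a fortiori} a generic pair drawn from $A$ and $B$.
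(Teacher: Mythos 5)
Your proposal is correct and follows essentially the same route as the paper's proof: the same case split on whether $p_d \mid p$, the same identification $(H_{a,p})_{i,p_d} = (H_{j,p_d})_{a',p}$ for verifying 3(a)--(c), and the same permutation-plus-subset argument for condition 3(c). The only difference is cosmetic: you organise the induction as an outer induction on $d$ and an inner induction on complexity with the statement quantified over all $p$, whereas the paper inducts lexicographically on $(d, p+c)$; both schemes validate exactly the same recursive appeals.
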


\begin{proof}
We will induct on $p$, $d$ and the complexity of the $(d,k)$-Horton set $H$. More precisely, we induct on the lexicographic ordering of $(d, p+c)$, where $c$ is the complexity. If $d=k$, then $H$ is 
a levelled set in $\RR^k$ with level map of the form $\phi_H=a\pi_1+b$ for some $a,b\in\RR$ with $a\neq 0$ 
by rule 1 of Definition~\ref{def:horton}, so $H_{a,p}$ is also $(d,k)$-Horton by the same rule. If $H$ has complexity 0, i.e., $|\pi_1(H)|\leq 1$, then the result is also clear. Thus, we may assume that $H$ has been built from sets of lower complexity as in rule 3.

If $p_d\mid p$, then  $H_{a,p}=(H_{a,p_d})_{a',p/p_d}$ for some integer $a'$, where $H_{a,p_d}$ is $(d,k)$-Horton by assumption. Since $H_{a, p_d}$ has lower complexity and $p/p_d<p$, our induction hypothesis implies that  $(H_{a,p_d})_{a',p/p_d}$ is $(d,k)$-Horton. So we only need to consider the case where $p_d$ and $p$ are coprime. To check that $H_{a,p}$ is $(d,k)$-Horton, we need to check that conditions 3(a), 3(b) and 3(c) of Definition~\ref{def:horton} are satisfied. 

For 3(a), since $\pi_{[d-1]}(H)$ is $(d-1,k)$-Horton, our induction hypothesis implies that $\pi_{[d-1]}(H_{a,p}) = \pi_{[d-1]}(H)_{a,p}$ is also $(d-1,k)$-Horton.

For 3(b), since $(H_{a,p})_{i,p_d}=(H_{i',p_d})_{a',p}$ 
for some integers $i'$ and $a'$ and $H_{i',p_d}$ has lower complexity, $(H_{a,p})_{i,p_d}$ is $(d,k)$-Horton by the induction hypothesis. 

For 3(c), suppose $I\subset\set{0,1,\ldots,p_d-1}$ with $|I|\geq 2$. Note that $(H_{a,p})_{i,p_d}=(H_{i',p_d})_{a_i,p}=H_{x,pp_d}$ for some $i', a_i, x$ such that $x=a+ip=i'+a_ip_d$ and $i'\in\{0,1,\ldots,p_d-1\}$, where $i'$ is uniquely determined by $i'\equiv a+ip\pmod{p_d}$. Since $p$ and $p_d$ are coprime, as $i$ ranges over $0,\ldots,p_d-1$, so does $i'$, in some permutation. Let $I'\subset\set{0,1,\ldots,p_d-1}$ be the image of $I$ under this permutation. 
Since $H$ is $(d,k)$-Horton, $I'$ can be decomposed into non-empty sets $(J',I'-J')$ such that $\bigcup_{i'\in J'} H_{i',p_d}$ lies deep below the set $\bigcup_{i'\in I'-J'} H_{i',p_d}$, so,  taking subsets, $\bigcup_{i'\in J'} (H_{i',p_d})_{a_i,p}$ lies deep below the set $\bigcup_{i'\in I'-J'} (H_{i',p_d})_{a_i,p}$. The partition $(J',I'-J')$ of $I'$ corresponds to a partition $(J,I-J)$ of $I$, so $\bigcup_{i\in J} (H_{a,p})_{i,p_d}$ lies deep below the set $\bigcup_{i\in I-J} (H_{a,p})_{i,p_d}$ and 3(c) is satisfied.
\end{proof}

We now show that any $(d-1, k)$-Horton set can be lifted to a $(d, k)$-Horton set. 

\begin{lem} \label{lem:horton}
Let $k< d$ and $B\subset \RR^{d-1}$ be $(d-1,k)$-Horton with level map of the form $a\pi_1+b$. 
Let $\overline{B}\subset\RR^d$ be the copy of $B$ with the last coordinate of all  points equal to 0. Then there are real numbers $a_x$ for each $x\in \pi_1(B)$ such that the set
$$H=\setcond{\vec{x}+a_{x_1}\vec{e}_d}{\vec{x}\in \overline{B}}$$
is $(d,k)$-Horton, where $x_1 = \pi_1(\vec{x})$.
\end{lem}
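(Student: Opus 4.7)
The plan is to generalise the base-$p_d$ digit expansion idea of Example~\ref{ex:2horton}. After translating the level map of $B$ by an integer constant so that every level $\phi_B(x)$ is a nonnegative integer, write its base-$p_d$ expansion
\[
\phi_B(x)=\sum_{j\geq 0}c_j(x)p_d^j,\qquad c_j(x)\in\{0,\ldots,p_d-1\},
\]
and, for a small parameter $\delta>0$ that will be fixed in terms of $B$ at the end, set
\[
a_x=\sum_{j\geq 0}c_j(x)\delta^{j+1}.
\]
For small $\delta$, points with distinct $c_0(x)$ have $a$-values separated by roughly $\delta$, while points sharing a single residue modulo $p_d$ have $a$-values that agree to within $O(\delta^2)$.

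I would then verify the three conditions of rule~3 in Definition~\ref{def:horton}. Condition~3(a) is automatic since $\pi_{[d-1]}(H)=B$. For condition~3(b), the key observation is that on $H_{i,p_d}$ one has $a_x=i\delta+\delta\cdot a'_x$, where $a'_x=\sum_{j\geq 0}c_j(\ell'(x))\delta^{j+1}$ is the same base-$p_d$ expansion applied to the natural level map $\ell'(x)=(\phi_B(x)-i)/p_d$ of $B_{i,p_d}$. Hence the affine transformation $(\vec{y},z)\mapsto(\vec{y},(z-i\delta)/\delta)$, which preserves the $(d,k)$-Horton property because it merely translates and positively rescales the last coordinate, turns $H_{i,p_d}$ into exactly the same $\delta$-expansion lift of the $(d-1,k)$-Horton set $B_{i,p_d}$ (Lemma~\ref{lem:subhorton}). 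Provided $B$ meets at least two residue classes modulo $p_d$ (otherwise a preliminary rescaling of the level map of $B$ reduces to this case), $|B_{i,p_d}|<|B|$, and an induction on $|B|$ yields that $H_{i,p_d}$ is $(d,k)$-Horton.

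The substantive content is condition~3(c). For an index set $I\subset\{0,\ldots,p_d-1\}$ with $|I|\geq 2$, I would take $m=\min I$, set $J=\{m\}$, and argue that $H_{m,p_d}$ lies deep below $\bigcup_{i\in I\setminus\{m\}}H_{i,p_d}$. Given a generic pair $(S,T)$ with $S$ in the upper set and $T\subset H_{m,p_d}$, and writing $s=\sum\alpha_is_i$, $t=\sum\beta_jt_j$ for the unique affine combinations with $\pi_{[d-1]}(s)=\pi_{[d-1]}(t)$, the difference expands in $\delta$ as
\[
\pi_d(s)-\pi_d(t)=\sum_{k\geq 0}A_k\delta^{k+1},\qquad A_k=\sum_i\alpha_ic_k(s_i)-\sum_j\beta_jc_k(t_j).
\]
Since $c_0(t_j)=m$ for every $j$ and $\sum\beta_j=1$, the leading coefficient simplifies to $A_0=\sum_i\alpha_i(c_0(s_i)-m)$, each summand of which is at least $1$.

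The hard part will be controlling the sign of the first nonzero $A_k$: if every $\alpha_i$ were nonnegative then $A_0>0$ would be immediate from $\sum\alpha_i=1$, but barycentric coefficients can be negative when $\pi_{[d-1]}(t)$ lies outside the simplex $\conv(\pi_{[d-1]}(S))$. The argument must exploit the recursive Horton structure of $B$ to rule out the dangerous sign patterns; if $A_0$ happens to vanish or be negative for some generic pair, the higher-order $\delta$-digits earn their keep by making the first nonzero $A_k$ positive. Once this combinatorial positivity is secured, the finiteness of $B$ bounds the $\alpha_i$ and $\beta_j$, and choosing $\delta$ smaller than an explicit threshold in those bounds guarantees $\pi_d(s)>\pi_d(t)$ for every generic pair, completing the verification of 3(c).
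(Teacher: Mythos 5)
Your verification of condition 3(c) has a genuine gap, and the specific construction you chose makes it unfixable as stated. With $a_x=\sum_j c_j(x)\delta^{j+1}$, the residue classes $H_{0,p_d},\ldots,H_{p_d-1,p_d}$ are lifted to heights in arithmetic progression $0,\delta,2\delta,\ldots$ at leading order. You correctly identify that the affine coefficients $\alpha_i,\beta_j$ in a generic pair can be negative (the intersection point of the two projected affine spans may lie far outside both simplices, and nothing in the Horton structure of $B$, which only constrains vertical orderings in $\RR^{d-1}$, rules this out). But then your proposed rescue is incoherent: if $A_0<0$, then $A_0$ \emph{is} the first nonzero coefficient, and for small $\delta$ the sign of $\pi_d(s)-\pi_d(t)$ is the sign of $A_0$; no higher-order digit can repair it, and rescaling $\delta$ does not change the ratio of the leading terms. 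Concretely, for $p_d\geq 3$ take $T=\{t_1,t_2\}$ with $c_0(t_1)=1$, $c_0(t_2)=0$ and $\beta_1=3,\beta_2=-2$, and $S$ in class $2$: then $\pi_d(t)\approx 3\delta>2\delta\approx\pi_d(s)$, so class $\{0,1\}$ does not lie deep below class $\{2\}$, and the symmetric example defeats $J=\{\min I\}$ as well. So the uniform geometric-series digit weights do not produce a $(d,k)$-Horton set in general once $p_d\geq 3$.

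The correct mechanism, and the one the paper uses, is to abandon fixed digit weights and choose the vertical offsets greedily: induct on the complexity of $B$, lift each $B_{i,p_d}$ to a $(d,k)$-Horton set $H_i$ by the induction hypothesis, and then translate $H_i$ in the $\vec{e}_d$ direction by a constant $M_i$ chosen so large that $H_i$ lies high above $\bigcup_{j<i}H_j$. This works precisely where your construction fails: a uniform translate of $S$ by $M\vec{e}_d$ adds exactly $M$ to $\pi_d(s)$ because $\sum_i\alpha_i=1$, irrespective of the signs of the $\alpha_i$, while $\pi_d(t)$ is unchanged; since there are finitely many generic pairs, a sufficiently large $M$ beats all of them. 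Condition 3(c) is then verified with $I-J=\{\max I\}$ (the singleton on top), and the offsets remain functions of $x_1$ alone because each residue class occupies its own set of first coordinates. Your treatment of 3(a) and the affine-rescaling reduction in 3(b) are fine, but they ride on the same induction, so the whole argument should be reorganised around complexity rather than around an explicit closed-form perturbation.
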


\begin{proof}
We shall induct on the size of the $(d-1,k)$-Horton set $B$ and the length of the interval containing $\phi_B(B)$. If $|\pi_1(B)|\leq 1$, there is nothing to prove. For any other $B$ and any $0 \leq i \leq p_d-1$, Lemma~\ref{lem:subhorton} implies that the set $B_i=B_{i,p_d}$ is $(d-1,k)$-Horton. Since either $|B_i|<|B|$ or $|B_i|=|B|$ and the interval containing $\phi_{B_i}(B_i)$ is shorter than the interval containing $\phi_B(B)$, our induction hypothesis implies that there are $(d,k)$-Horton sets $H_i$ of the required form corresponding to the $(d-1,k)$-Horton sets $B_i$. We shift the sets $H_i$ along the $d$-th axis in such a way that $H_i$ lies high above $\bigcup_{j=0}^{i-1} H_j$, where $H_1, \dots, H_{i-1}$ have already been shifted. We then take $H=\bigcup_{j=0}^{p_d-1} H_j$. This set is of the required form. To show that $H$ is $(d,k)$-Horton, we only need to check condition 3(c) of Definition~\ref{def:horton}. But, for any $I\subset\{0,1,\ldots,p_d-1\}$, if $z=\max I$, then we may set $J=I\setminus \{z\}$.
\end{proof}

A similar result holds for lifting many Horton sets simultaneously.

\begin{lem}
\label{lem:unif-horton}
Let $k\geq 0$ and $B_i\subset\RR^{d_i-1}$ be $(d_i-1,k)$-Horton with the same level map of the form $a\pi_1+b$ for $i=1,\ldots,n$, where $d_i>k$. Let $\overline{B}_i\subset\RR^{d_i}$ be the copy of $B_i$ with the last coordinate of all points equal to 0. Then there are real numbers $a_x$ for each $x\in \bigcup_i \pi_1(B_i)$ such that the sets
$$H_i=\setcond{\vec{x}+a_{x_1}\vec{e}_{d_i}}{\vec{x}\in \overline{B}_i}$$
are $(d_i,k)$-Horton for $i=1,\ldots,n$. Furthermore, the numbers $a_x$ can be chosen to be bounded above in absolute value by any positive real number.
\end{lem}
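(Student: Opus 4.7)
The plan is to mimic the proof of Lemma~\ref{lem:horton} by running its recursive construction in parallel for all the sets $B_i$, inducting on a measure of total complexity such as $\sum_i c(B_i)$. The base case is when $|\pi_1(B_i)|\leq 1$ for every $i$: each lift $H_i$ then satisfies $|\pi_1(H_i)|\leq 1$ and is $(d_i,k)$-Horton by rule 2 of Definition~\ref{def:horton}, so we may choose $a_x=0$, or any other values within the prescribed bound $\delta>0$.

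In the inductive step, the natural split of each $B_i$ is by its own prime $p_{d_i}$, but these primes generally differ across $i$. To reconcile, I would split all the $B_i$'s by a common modulus $p:=\lcm_i(p_{d_i})$: set $B_i^{(j)}:=(B_i)_{j,p}$, which by Lemma~\ref{lem:subhorton} is $(d_i-1,k)$-Horton and, for each fixed $j$, shares a common level map $(\phi-j)/p$ across $i$. Since the level map determines $j$ from the first coordinate, the sets $\pi_1(B_i^{(j)})$ for distinct values of $j$ are pairwise disjoint. I would then apply the inductive hypothesis separately to each fixed-$j$ subcollection $\{B_i^{(j)}\}_i$, with a suitably small tolerance, to obtain common shifts $a_x^{(j)}$ for which each lift $H_i^{(j)}$ is $(d_i,k)$-Horton.

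To combine these lifts into a single $H_i$ that is $(d_i,k)$-Horton via rule 3 with the prime $p_{d_i}$, I would add small constants $C_j$ to the respective $a_x^{(j)}$ in such a way that, for every $i$, the coarser pieces $(H_i)_{b,p_{d_i}}=\bigcup_{j\equiv b\pmod{p_{d_i}}}H_i^{(j)}$ are themselves $(d_i,k)$-Horton and admit the deep-below partition required by condition 3(c). A naive monotone choice $C_0<C_1<\cdots<C_{p-1}$ does not suffice, since the residue classes modulo $p_{d_i}$ interleave in such an ordering and neither class lies deep below the other as a whole. Instead, one must select the $C_j$ in a nested, hierarchical manner, effectively processing the primes $p_{d_i}$ in a fixed order so that, at each level, the deep-below relations needed by the current prime are established among the appropriate blocks of $H_i^{(j)}$'s.

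The main obstacle is precisely this hierarchical choice of the $C_j$'s and the verification of condition 3(b) of Definition~\ref{def:horton} for each $i$, i.e., that each coarser piece $(H_i)_{b,p_{d_i}}$ is itself $(d_i,k)$-Horton. I expect this to be handled by iterating rule 3 within each such piece with further powers of $p_{d_i}$, using the Horton structure of the $H_i^{(j)}$'s supplied by the inductive hypothesis together with Lemma~\ref{lem:subhorton}. The bound $|a_x|<\delta$ is inherited automatically: the deep-below relation is a condition on the signs of certain affine combinations of the shifts, hence invariant under uniform scaling in the last coordinate, so one may run the recursion at each level with an arbitrarily smaller tolerance to keep $a_x$ under any prescribed bound.
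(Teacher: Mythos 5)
Your overall architecture --- run the recursion of Lemma~\ref{lem:horton} in parallel over the $B_i$, handle the base case by rule 2, and obtain the bound on the $a_x$ by rescaling at the end --- is the right one and matches the paper's (very terse) intent. The gap is exactly where you flag it: the choice of the constants $C_j$. When the top-level primes $p_{d_1},\dots,p_{d_n}$ genuinely differ, no hierarchical (nor any other) choice of the $C_j$ can work. Take $p_{d_1}=2$, $p_{d_2}=3$, $p=6$, with every residue class modulo $6$ inhabited by large generic pieces. Condition 3(c) for $H_1$ with $I=\{0,1\}$ forces the pieces of even level to be separated from those of odd level by a large vertical gap: for large sets, ``deep below'' can only be achieved by such a band separation, since otherwise one exhibits a generic pair $(S,T)$ with a singleton $S$ from the putatively higher set lying below the affine span of $T$. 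But then $(H_2)_{0,3}=H_2^{(0)}\cup H_2^{(3)}$ and $(H_2)_{1,3}=H_2^{(1)}\cup H_2^{(4)}$ each meet both bands, so neither lies deep below the other and 3(c) for $H_2$ fails at $I=\{0,1\}$; processing the primes in the other order fails symmetrically, and the two band-separation requirements are in fact jointly unsatisfiable. Your worry about condition 3(b) is a symptom of the same disease: $(H_i)_{b,p_{d_i}}$ is a union of $p/p_{d_i}$ of your mod-$p$ pieces, and splitting it further by $p_{d_i}$ does not align with the mod-$p$ decomposition.

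The resolution is that the lemma should not be read with each $H_i$ splitting by its own default prime $p_{d_i}$. As the lemma is actually invoked in Section 3.3, the sets lifted simultaneously at a given step of the construction are Horton with respect to subsequences that all acquire the \emph{same} new final prime $p_j$; that is, rule 3 is applied to every $H_i$ with one common prime $q$. Under that reading your argument collapses to the correct one: $p=q$, each $(H_i)_{b,q}=H_i^{(b)}$ is Horton directly from the inductive hypothesis (no iteration of rule 3 is needed for 3(b)), and the monotone choice $C_0\ll C_1\ll\cdots\ll C_{q-1}$, with gaps large enough to give ``high above'' for all $i$ simultaneously, verifies 3(c) via $J=I\setminus\{\max I\}$ exactly as in Lemma~\ref{lem:horton}. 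So you should make explicit which prime governs the top-level splitting of each $H_i$, note that these may (and in the application must) be taken equal, and discard the $\lcm$ machinery.
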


\begin{proof}
The proof is exactly the same as for Lemma~\ref{lem:horton}, except that we must be careful to shift the different $B_i$ in a consistent manner. For the final assertion, note that if the numbers $a_x$ work, then the rescaled numbers formed by multiplying by any non-zero number will also work, so we can make all the terms as small as we please. 
\end{proof}

The final result of this section says that, under appropriate conditions, if a certain projection of a subset of a Horton set is well-spread, then so is the subset itself.

\begin{lem}
\label{lem:goup}
Given $k\leq d$ and positive integers $N$ and $r$ such that $r$ is coprime to $p_{k+1}p_{k+2}\cdots p_d$, let $L\subset\RR^d$ be $(d,k)$-Horton and  $H\subset L$ be such that $\pi_{[k]}(H)$ is $(N,rp_{k+1}p_{k+2}\cdots p_d)$-well-spread in $\pi_{[k]}(L)$. Then $H$ is $(2^{d-k}N,r)$-well-spread in $L$.
\end{lem}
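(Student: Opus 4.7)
The plan is to induct on $d-k$ (for fixed $k$), with a secondary induction on the complexity of $L$. The base case $d=k$ is immediate, since the hypothesis on $\pi_{[k]}(H)=H$ then coincides with the conclusion (the empty product $p_{k+1}\cdots p_d$ equals $1$ and $2^{d-k}=1$); the complexity-zero case $|\pi_1(L)|\le 1$ is vacuous, because $\pi_{[k]}(H)$ is then confined to an affine hyperplane of $\RR^k$, forcing $|H|<N\le 2^{d-k}N$.

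For the inductive step, fix $a,p\ge 1$, an integer $b$, and $S\subset H_{a,p}$ with $|S|\ge 2^{d-k}N$; I seek $q\in L$ with $\phi_L(q)\equiv a+bp\pmod{pr}$ lying in $\Int\conv S$. Let $I=\{i:S\cap L_{i,p_d}\ne\emptyset\}$. When $p_d\mid p$, or when $\gcd(p,p_d)=1$ and $|I|=1$, the set $S$ lies in a single level class $L_{i_0,p_d}$, which is $(d,k)$-Horton of strictly smaller complexity; a direct verification shows that $\pi_{[k]}(H\cap L_{i_0,p_d})$ inherits the requisite well-spread property in $\pi_{[k]}(L_{i_0,p_d})$, so the inner induction on complexity handles these sub-cases.

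The main case is $\gcd(p,p_d)=1$ and $|I|\ge 2$. Applying Rule 3(c) of Definition~\ref{def:horton}, I split $I=J\sqcup(I-J)$ so that $A_L=\bigcup_{i\in J}L_{i,p_d}$ lies deep below $B_L=\bigcup_{i\in I-J}L_{i,p_d}$, and write $S_A=S\cap A_L$, $S_B=S\cap B_L$; WLOG $|S_B|\ge|S|/2\ge 2^{d-1-k}N$. The primary induction applied to $\pi_{[d-1]}(L)$, which is $(d-1,k)$-Horton, with modulus $r'=rp_d$ (coprime to $p_{k+1}\cdots p_{d-1}$ by hypothesis) yields that $\pi_{[d-1]}(H)$ is $(2^{d-1-k}N,rp_d)$-well-spread in $\pi_{[d-1]}(L)$. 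Applied to $\pi_{[d-1]}(S_B)$ (whose size equals $|S_B|$, since $\pi_{[d-1]}$ is injective on $L$—a consequence of $\pi_{[k]}$ being injective and $k<d$), this produces a point $q_{d-1}\in\pi_{[d-1]}(L)$ lying in the interior of $\conv\pi_{[d-1]}(S_B)$, with any prescribed level modulo $prp_d$ compatible with the $(a,p)$-level class. I lift $q_{d-1}$ to the unique $q\in L$ and, using the Chinese Remainder Theorem together with $\gcd(p,p_d)=\gcd(r,p_d)=1$, I arrange $\phi_L(q)\equiv a+bp\pmod{pr}$ while freely specifying the layer $L_{i^*,p_d}$ containing $q$.

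To conclude $q\in\Int\conv S$, I apply Lemma~\ref{lem:uphull} with $S_1=\{q\}$, $S_2=S_B$, and $S_0$ a non-empty subset of $S_A$. Choosing $i^*\in J$ places $q\in A_L$ so that $\{q\}$ lies deep below $S_B\subset B_L$. I then iterate Rule 3(c) within $J$ to obtain a sub-split $J=J_1\sqcup J_2$ with $\bigcup_{J_1}L_{i,p_d}$ deep below $\bigcup_{J_2}L_{i,p_d}$, placing $q$ in the latter while retaining a non-empty portion of $S_A$ in the former, and taking this portion as $S_0$; the required deep-below conditions for Lemma~\ref{lem:uphull} then follow from the nested Rule 3(c) structure. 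The principal obstacle is handling the degenerate configurations where no such sub-split separates $q$ from $S_A$—that is, when $S_A$ is confined to the single minimal sub-layer containing $q$—which is resolved by a symmetric argument that instead places $q\in B_L$ and iterates within $I-J$ to separate $q$ from a suitable portion of $S_B$. It is precisely the well-spread hypothesis with modulus $rp_{k+1}\cdots p_d$ that permits the free choice of $q$'s layer modulo $p_d$, enabling these case analyses.
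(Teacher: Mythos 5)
Your overall skeleton matches the paper's: reduce to a one-dimension-at-a-time statement by applying the inductive hypothesis to $\pi_{[d-1]}(L)$ (which gives that $\pi_{[d-1]}(H)$ is $(2^{d-1-k}N,rp_d)$-well-spread), handle the case where $S$ meets a single class mod $p_d$ by recursion on complexity, and otherwise split the occupied classes by rule 3(c), use the projected well-spreadness on the larger half $S_B$ to produce a candidate point with prescribed level, and finish with Lemma~\ref{lem:uphull}. The level arithmetic (CRT with $\gcd(r,p_d)=\gcd(p,p_d)=1$) is fine. The gap is in the final application of Lemma~\ref{lem:uphull}. You set $S_1=\{q\}$ with $q$ placed in the lower block $A_L$, $S_2=S_B\subset B_L$ and $S_0\subset S_A\subset A_L$. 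The lemma then requires $S_0$ to lie deep below $S_1\cup S_2=\{q\}\cup S_B$, and this is not available: by Definition~\ref{def:dha} one must control every generic pair $(U,T)$ with $U\subset\{q\}\cup S_B$, and when $U$ contains both $q$ and points of $S_B$ it straddles the two blocks, so neither ``$A_L$ lies deep below $B_L$'' nor your sub-split of $J$ says anything about it; deep-below relations do not combine across unions on the upper side. Moreover, rule 3(c) supplies exactly one decomposition of each index set, so you cannot arrange that the split of $J$ puts $q$ above and a non-empty piece of $S_A$ below; and the ``symmetric'' fallback (placing $q\in B_L$ and separating it from a portion of $S_B$) violates the other hypothesis of Lemma~\ref{lem:uphull}, since $q_{d-1}$ is only known to lie in $\Int\conv\pi_{[d-1]}(S_B)$, not in the interior of the convex hull of the projection of a proper sub-portion $S_2$ of $S_B$.

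The repair, which is how the paper argues, is to place the witness inside the \emph{upper} block and to keep all of $S_B$ inside $S_1\cup S_2$. Let $m$ be the least positive integer such that $S_B$ meets more than one class mod $p_d^m$ (such an $m$ exists because the projected well-spreadness forces $|\phi_L(S_B)|>1$). All of $S_B$ lies in a single class $L_{i,p_d^{m-1}}$, which is again $(d,k)$-Horton, so among two occupied classes mod $p_d^m$ one, say $L_{i,p_d^m}$, lies high above the other, $L_{j,p_d^m}$. Using the well-spreadness of $\pi_{[d-1]}(H_{i,p_d^{m-1}})$ in $\pi_{[d-1]}(L_{i,p_d^{m-1}})$ with modulus $rp_d$, choose the witness $\vec{x}$ with the prescribed level inside $L_{j,p_d^m}$ and with $\pi_{[d-1]}(\vec{x})\in\Int\conv\pi_{[d-1]}(S_B)$. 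Then take $S_2$ to be the part of $S_B$ in $L_{i,p_d^m}$, $S_1$ the rest of $S_B$ together with $\vec{x}$, and $S_0=S_A$. Now $S_1$ lies deep below $S_2$ by the choice of $i,j$; $S_0$ lies deep below $S_1\cup S_2$ because the latter is entirely contained in $B_L$; and $\pi_{[d-1]}(\vec{x})$ is interior to $\conv\pi_{[d-1]}(S_1\cup S_2)$ because $S_1\cup S_2\supseteq S_B$. All hypotheses of Lemma~\ref{lem:uphull} are then genuinely met.
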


\begin{proof}
The case $k=d$ is clear. 
Suppose now that we have proved the lemma for all $d$ in the case where $k=d-1$. Then, for general $d>k$, since $\pi_{[k+1]}(L)$ is $(k+1,k)$-Horton, the case where $d=k+1$ implies that $\pi_{[k+1]}(H)$ is $(2N, rp_{k+2}\cdots p_d)$-well-spread in $\pi_{[k+1]}(L)$. Repeating this argument using the fact that an $(\ell, k)$-Horton set is also $(\ell, \ell-1)$-Horton for each $k+2 \leq \ell \leq d$, we see that $H$ is $(2^{d-k}N, r)$-well-spread in $L$. Thus, we only have to consider the case where $k=d-1$.

Suppose that the lemma is false. Let $H\subset L$ be the smallest pair, in the sense that $(|L|,|H|)$ is lexicographically smallest, that is a counterexample to the lemma. That is, $\pi_{[d-1]}(H)$ is $(N,rp_d)$-well-spread in $\pi_{[d-1]}(L)$, but $H$ is not $(2N,r)$-well-spread in $L$. In particular, $|L_{a,p_d}|\geq 1$ for all $a$, so that $|L_{a,p_d}|<|L|$ for all $a$. 

Note that for any integers $a$ and $p$ with $p \geq 1$, $\pi_{[d-1]}(H_{a,p})$ is  $(N,rp_d)$-well-spread 
in $\pi_{[d-1]}(L_{a,p})$ and, by Lemma~\ref{lem:subhorton}, $L_{a,p}$ is $(d,k)$-Horton, so we only have to show that $H$ is $(2N,r)$-spread in $L$. 

Suppose $S\subset H$ is any subset of size $2N$ and $a'$ is any integer. Then we wish to show that $\Int\conv S$ contains a point of $L_{a',r}$. Let $I=\setcond{i\in [p_d]}{S_{i,p_d}\neq\emptyset}$. If $|I|=1$, say $I=\{i\}$, then, since $\pi_{[d-1]}(H_{i,p_d})$ is  $(N,rp_d)$-well-spread in $\pi_{[d-1]}(L_{i,p_d})$, our minimality assumption implies that $H_{i,p_d}$ is $(2N,r)$-well-spread in $L_{i,p_d}$. Hence, since $p_d$ and $r$ are coprime, there exists an integer $z$ depending on $a'$ such that $\Int\conv S$ contains a point of $(L_{i,p_d})_{z,r}\subset L_{a',r}$. 

So assume that $|I|>1$. Then we can partition $I$ into non-empty $J$ and $I-J$ so that $L_{lo}=\bigcup_{i\in J} L_{i,p_d}$ lies deep below the set $L_{hi}=\bigcup_{i\in I-J} L_{i,p_d}$. Set $S_{lo}=S\cap L_{lo}$ and $S_{hi}=S\cap L_{hi}$ and suppose, without loss of generality, that $|S_{lo}|\leq |S_{hi}|$, so $|S_{hi}|\geq N$. View $S_{hi}$ as a levelled set with the same level map as $L$. Let $I_m=\setcond{i\in [p_d^m]}{(S_{hi})_{i,p_d^m} \neq \emptyset}$ and let $m$ be the smallest positive integer such that $|I_m|>1$. Such an $m$ exists, since $\pi_{[d-1]}(H)$ is $(N,rp_d)$-spread in $\pi_{[d-1]}(L)$ and $|\pi_{[d-1]}(S_{hi})|=|S_{hi}|\geq N$, so $\pi_{[d-1]}(S_{hi})$ contains at least one point of $\pi_{[d-1]}(L)$ in the interior of its convex hull, which in turn implies that $S_{hi}$ cannot lie on a hyperplane perpendicular to the first axis, i.e., $|\phi_L(S_{hi})|>1$. 

Let $i_0$ be the unique element of $I_{m-1}$. Since $L$ is $(d,k)$-Horton, so is $L_{i_0,p_d^{m-1}}$ by Lemma~\ref{lem:subhorton}. 
Now partition $I_m$ into non-empty $J'$ and $I_m-J'$ so that $L_{hilo}=\bigcup_{i\in J'} L_{i,p_d^m}$ lies deep below the set $L_{hihi}=\bigcup_{i\in I_m-J'} L_{i,p_d^m}$. Fix any $j\in J'$. Then, since $j\equiv i_0\pmod{p_d^{m-1}}$, we have $L_{j,p_d^m}\cap L_{a',r}=L_{z,rp_d^m}=(L_{i_0,p_d^{m-1}})_{z',rp_d}$ for some integers $z$ and $z'$. But $\pi_{[d-1]}(H_{i_0,p_d^{m-1}})$ is $(N,rp_d)$-spread in $\pi_{[d-1]}(L_{i_0,p_d^{m-1}})$ and, hence, since $|\pi_{[d-1]}((S_{hi})_{i_0,p_d^{m-1}})| = |\pi_{[d-1]}(S_{hi})| \geq N$, 
$\Int\conv \pi_{[d-1]}((S_{hi})_{i_0,p_d^{m-1}})$ contains a point of $\pi_{[d-1]}((L_{i_0,p_d^{m-1}})_{z',rp_d}) = \pi_{[d-1]}(L_{z,rp_d^m})$, 
say $\pi_{[d-1]}(\vec{x})$ with $\vec{x}\in L_{z,rp_d^m}$. Since $L_{z,rp_d^m}\subseteq L_{j,p_d^m}\subseteq L_{hilo}$, we have $\vec{x}\in L_{hilo}$. Applying Lemma~\ref{lem:uphull} with $S_0=S_{lo}$, $S_1=(S_{hi}\cap L_{hilo})\cup\{\vec{x}\}$ and $S_2=S_{hi}\cap L_{hihi}$, noting that $S_2\subseteq L_{hihi}$ is high above $S_1\subseteq L_{hilo}$ and $S_0\subseteq L_{lo}$ is deep below $S_1\cup S_2\subseteq L_{hi}$, we get that $\vec{x}\in\Int\conv S$ with $\vec{x}\in L_{z,rp_d^m}\subset L_{a',r}$.
\end{proof}

Note that if $L$ is a $(d, 1)$-Horton set with level map $\pi_1$ and $\pi_1(L)$ is a consecutive set of integers, then $\pi_1(L)$ is $(N, p_2 \cdots p_d)$-well-spread in itself provided $N \geq p_2 \cdots p_d + 2$. But then Lemma~\ref{lem:goup} implies that $L$ is $(2^{d-1} N, 1)$-spread in itself, which is the same as saying that $L$ is $2^{d-1} N$-hole-free. This is the essence of Valtr's construction of high-dimensional hole-free sets.

\subsection{The construction}

We now come to the proof of our main technical result, Theorem~\ref{thm:lattice}, which, we recall, is a construction of point sets in $\RR^d$ which lie arbitrarily close to the lattice cube $[n]^d$ and are $C'_d$-hole-free for some constant $C'_d$ depending only on $d$.

Fix $n>1$, let $p_1<p_2<\cdots<p_d$ be the first $d$ primes and recall that  $\vec{e}_1,\ldots,\vec{e}_d$ is the standard basis for $\RR^d$. Let $I=[n]^d$, which we will use as an indexing set. We start with the lattice cube $\cP^{(1,0)}=[n]^d=\setcond{P^{(1,0)}_{\vec{x}}}{\vec{x}\in I}$, where $P^{(1,0)}_{\vec{x}}=\vec{x}$. For any such set $\cP=\setcond{P_{\vec{x}}}{\vec{x}\in I}$ indexed by $I$ and any set $J\subset \RR^d$, we denote by $\cP_J$ the set
$$\setcond{P_{\vec{x}}}{\vec{x}\in J\cap I}.$$
We shall perturb $\cP^{(1,0)}$ in several steps, to obtain a sequence of sets and negligible bijections 
$$\cP^{(1,0)}\to \cP^{(1,1)}\to\cdots\to \cP^{(1,d)}=\cP^{(2,0)}\to\cdots\to \cP^{(2,d)}=\cP^{(3,0)}\to\cdots\to \cP^{(d,d)}.$$

For $i>1$, we will set $\cP^{(i,0)}=\cP^{(i-1,d)}$. Given the set $\cP^{(i,j-1)}=\setcond{P^{(i,j-1)}_{\vec{x}}}{\vec{x}\in I}$ for some $1\leq j\leq d$, we construct $\cP^{(i,j)}=\setcond{P^{(i,j)}_{\vec{x}}}{\vec{x}\in I}$ as follows. If $j=i$, then there is no change, i.e.,  $P^{(i,j)}_{\vec{x}}=P^{(i,j-1)}_{\vec{x}}$. For $j\neq i$, we perturb the planes perpendicular to $\vec{e}_i$ in the $j$-th direction, setting 
$$P^{(i,j)}_{\vec{x}}=P^{(i,j-1)}_{\vec{x}}+a_{x_i}\vec{e}_j$$
for $\vec{x}=(x_1,\ldots,x_d)\in I$, 
where $a_1,\ldots,a_n$ will be some carefully chosen constants. Before deciding their values, we make some definitions.

Let $V$ be any affine subspace of dimension $k$ such that 
$V$ is perpendicular to $\vec{e}_1,\ldots,\vec{e}_{i-1}$  but not to $\vec{e}_i$ and $V\cap\ZZ^d$ is a $k$-dimensional lattice.
Let $1\leq j_{k+1}<j_{k+2}<\cdots<j_d$ be such that, for $k+1\leq l\leq d$, $V+\RR\vec{e}_1+\cdots +\RR\vec{e}_{j_l-1}$ is of dimension $l-1$ but $V_l=V+\RR\vec{e}_1+\cdots +\RR\vec{e}_{j_l}$ is of dimension $l$, so we have a flag of affine subspaces $V=V_k\subset V_{k+1}\subset\cdots\subset V_d=\RR^d$. We define an affine isomorphism $\varphi_V:\RR^d\to \RR^d$ in steps using the flag $V_k\subset\cdots\subset V_d$ by defining $\varphi_V:V_l\to\RR^l\subset\RR^d$ (the embedding $\RR^l\subset\RR^d$ is given by taking all but the first $l$ coordinates to be 0). Note that $V\cap I$ is a levelled set with level map $\pi_i$. We begin by defining $\varphi_V:V\to\RR^k$ to be any affine isomorphism which sends this level map to $\pi_1$ (in other words, $\pi_1\circ \varphi_V=\pi_i$ on $V$). To define $\varphi_V:V_l\to \RR^l$ for $l > k$, note that any vector in $V_l$ can be written uniquely as $\vec{x}+r\vec{e}_{j_l}\in V_l$ with $\vec{x}\in V_{l-1}$ and $r\in\RR$, so we may set $\varphi_V(\vec{x}+r\vec{e}_{j_l})=\varphi_V(\vec{x})+r\vec{e}_l$. 

Now $a_1,\ldots,a_n$ are chosen such that the following properties hold:

\begin{enumerate}
\item The natural bijection $\Phi:\cP^{(i,j-1)}\to \cP^{(i,j)}$ is negligible.
\item For any affine subspace $W$ of dimension $k$ which is parallel to $\vec{e}_j$ and any set $S\subset \cP^{(i,j-1)}\cap W$, the bijection $S\to \Phi(S)$ is negligible in the ambient space $W\cong\RR^k$.
\item For any $V,k,\varphi_V$ as above, the bijection  $\pi_{[k]}(\varphi_V(\cP^{(i,j-1)}_V))\to \pi_{[k]}(\varphi_V(\cP^{(i,j)}_V))$ is negligible.
\item For any $V,k,\varphi_V, j_{k+1},\ldots,j_d$ as above and any 
$m\geq k+1$, there is some $\vec{v}\in\RR^d$, depending only on $V$, such that the set $\varphi_V(\cP^{(i,j_m)}_{V}-\vec{v})\subset \RR^m$ is $(m,k)$-Horton with respect to the sequence $p_{j_{k+1}},\ldots,p_{j_{m}}$.
\end{enumerate}

Properties 1, 2 and 3 will be satisfied for any sufficiently small sequence $a_1,\ldots,a_n$. (To see that Property 2 holds, we only need to consider the finitely many possible subsets $S$ of $\cP^{(i,j-1)}$.) 
To show that property 4 may also be satisfied, note that $\cP^{(i,0)}_{V}$ is a translate of $V\cap I$. Indeed, since $V$ is perpendicular to $\vec{e}_l$ for $l=1,\ldots,i-1$, each of the perturbations in the sequence $\cP^{(l,0)}\to \cP^{(l,1)}\to \cdots\to \cP^{(l+1,0)}$  translates each point of $\cP^{(l,k)}_V$ by the same amount. Thus, $\cP^{(i,0)}_{V}$ is a translate of $V\cap I$, 
say $\cP^{(i,0)}_{V}=(V\cap I)+\vec{v}$. So $\varphi_V(\cP^{(i,0)}_{V}-\vec{v})=\varphi_V(V\cap I)\subset \RR^{k}$ is $(k,k)$-Horton. Suppose now that we have shown inductively that $\cP^{(i,j_{m-1})}_{V}-\vec{v}\subset V_{m-1}$ and that $\varphi_V(\cP^{(i,j_{m-1})}_{V}-\vec{v})\subset \RR^{m-1}$ is $(m-1,k)$-Horton with respect to the sequence $p_{j_{k+1}},\ldots,p_{j_{m-1}}$. From property 2 above, $\cP^{(i,j_{m-1})}_{V}\to \cP^{(i,j_m-1)}_V$ is negligible in the ambient space $V_{m-1}+\vec{v}$, so that $\varphi_V(\cP^{(i,j_m-1)}_{V}-\vec{v})\subset \RR^{m-1}$ 
is also $(m-1,k)$-Horton. $\cP^{(i,j_m)}_{V}-\vec{v}\subset V_{m}$ will always hold, no matter the choice of $a_1,\ldots,a_n$, so we only need to choose $a_1,\ldots,a_n$ so that $\varphi_V(\cP^{(i,j_m)}_{V}-\vec{v})\subset \RR^{m}$ is $(m,k)$-Horton with respect to the sequence $p_{j_{k+1}},\ldots,p_{j_{m}}$. But Lemma~\ref{lem:unif-horton} guarantees that such a choice exists and that $a_1,\ldots,a_n$ can be taken to be arbitrarily small.

Thus, we have constructed a collection of sets and bijections 
$$\cP^{(1,0)}\to \cP^{(1,1)}\to\cdots\to \cP^{(1,d)}=\cP^{(2,0)}\to\cdots\to \cP^{(2,d)}=\cP^{(3,0)}\to\cdots\to \cP^{(d,d)},$$
where, for any affine subspace $V$ of dimension $k$ such that $V\cap\ZZ^d$ is a $k$-dimensional lattice, there are $i$ and $j$ such that $\cP^{(i,j)}_{V}$ is affine isomorphic to a $(d,k)$-Horton set with respect to some subsequence of $p_1,\ldots,p_d$. Moreover, since all the bijections are negligible by property 1 and the composition of negligible bijections is itself negligible, the bijection $\cP^{(i,j)}\to \cP^{(d,d)}$ is also negligible.

We remark that just as $[n]^d$ is the Minkowski sum of $[n]$ along each of the $d$ axes, $\cP^{(d,d)}$ is also the Minkowski sum of some perturbation of $[n]$ along each of the $d$ axes.

\begin{thm}
\label{thm:hdd}
The set $\cP^{(d,d)}$ constructed above is $C_d'$-hole-free for some constant $C_d'$.
\end{thm}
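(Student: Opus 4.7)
The plan is to induct on $k = \dim\operatorname{aff}(S)$ for a putative large subset $S \subseteq [n]^d$. Given $S$, let $V = \operatorname{aff}(S)$, so that $V \cap \ZZ^d$ is a $k$-dimensional lattice containing $S$. By property~4 of the construction, at the intermediate step $(i,j_d)$ determined by $V$, the set $L := \varphi_V(\cP^{(i,j_d)}_V - \vec{v}) \subseteq \RR^d$ is $(d,k)$-Horton; since the bijection $\cP^{(i,j_d)} \to \cP^{(d,d)}$ is negligible, it will suffice to produce a point of $L$ in $\Int \conv T$, where $T := \varphi_V(\cP^{(i,j_d)}_S - \vec{v})$, and then pull back via $\varphi_V^{-1}$ and invoke negligibility.

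For the base case $k = 1$, I would note that $L$ is a $(d,1)$-Horton set whose $\pi_1$-image, after the rescaling absorbed into $\varphi_V$, forms a consecutive block of integers. The remark following Lemma~\ref{lem:goup} then gives that $L$ is $2^{d-1}(p_2 \cdots p_d + 2)$-hole-free, so provided $|S|$ exceeds this threshold, $\Int \conv T$ will contain a point of $L$.

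For the inductive step $k \geq 2$, I would apply Lemma~\ref{lem:well-spread} to $S$ inside the $k$-dimensional lattice $V \cap \ZZ^d$ with parameters $m = C_{k-1}^{\star}$ (the inductive threshold) and $r' = p_{k+1} \cdots p_d$. In alternative~(A), at least $m = C_{k-1}^\star$ points of $S$ lie on some hyperplane $V'$ of $V$, whose affine hull is a lattice-aligned affine subspace of dimension at most $k-1$, so the inductive hypothesis applied to $S \cap V'$ will supply a point of $\cP^{(d,d)}_{V'} \subseteq \cP^{(d,d)}$ in $\Int \conv \cP^{(d,d)}_{S \cap V'} \subseteq \Int \conv \cP^{(d,d)}_S$. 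In alternative~(B), Lemma~\ref{lem:well-spread} will give that $S$ is $(N, r')$-well-spread in $V \cap \ZZ^d$ for $N = m(r'+1)^k k^{2k}$. Transferring this through the lattice isomorphism $\varphi_V$ and then through the negligible projection bijection provided by property~3, $\pi_{[k]}(T)$ becomes $(N, r')$-well-spread inside $\pi_{[k]}(L)$. Since $L$ is $(d,k)$-Horton, Lemma~\ref{lem:goup} applied with target modulus $r = 1$ then yields that $T$ is $(2^{d-k}N, 1)$-spread in $L$, so setting $C_k^{\star} := 2^{d-k} N$ ensures that a point of $L$ falls inside $\Int \conv T$.

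The main technical obstacle will be the clean transport of well-spreadness through both the affine isomorphism $\varphi_V$ and the small perturbations introduced during the construction, while keeping the level structure coherent; properties~1--4 of the construction were tailored precisely for this. Unrolling the recursion $C_k^{\star} \lesssim 2^{d-k} k^{2k} (p_{k+1} \cdots p_d)^k \cdot C_{k-1}^{\star}$ from $C_1^{\star} = 2^{d-1}(p_2 \cdots p_d + 2)$ will yield the claimed $C_d' = d^{O(d^3)}$.
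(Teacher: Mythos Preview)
Your proposal is correct and mirrors the paper's proof: induction on the affine dimension $k$, with Lemma~\ref{lem:well-spread} supplying the dichotomy and Lemma~\ref{lem:goup} lifting well-spreadness up through the $(d,k)$-Horton structure, all glued together via properties 1--4 of the construction. The only detail to tidy is that the Horton structure on $L$ is with respect to a \emph{subsequence} $q_{k+1},\ldots,q_d$ of $p_1,\ldots,p_d$ (determined by $V$), so Lemma~\ref{lem:goup} requires $(N,\,q_{k+1}\cdots q_d)$-well-spreadness rather than $(N,\,p_{k+1}\cdots p_d)$; but since $q_{k+1}\cdots q_d\le p_{k+1}\cdots p_d$, your choice of $N$ is already large enough that Lemma~\ref{lem:well-spread} delivers this directly.
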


Our proof goes as follows. We inductively find constants $N_1,N_2,\ldots,N_d$ such that, for each $k$, for any $k$-dimensional affine subspace $V_k\subset\RR^d$ and any $S \subset I_k = I\cap V_k$ of size $N_k$, the set $\cP^{(d,d)}_S\subset \cP^{(d,d)}$ contains some element of $\cP^{(d,d)}_{I_k}$ in the interior of its convex hull. Then we may set $C_d'$ to be $N_d$. 
At each step, Lemma~\ref{lem:well-spread} tells us that either there are $N_{k-1}$ points of $S$ on a hyperplane $V_{k-1}$, in which case we are done by induction, or $S$ is well-spread in $I_k$. In the latter case, using the fact that, by property 4 of our construction, the set $\cP^{(d,d)}_{I_k}$ is (up to a negligible perturbation) affine isomorphic to a $(d, k)$-Horton set, we can apply Lemma~\ref{lem:goup} to conclude that there is a point of $\cP^{(d,d)}_{I_k}$ in the interior of the convex hull of $\cP^{(d,d)}_S$.

\begin{proof}[Proof of Theorem~\ref{thm:hdd}]
Let $V_1\subset\RR^d$ be any 1-dimensional affine subspace, so that $I_1=I\cap V_1$ is a consecutive subset of (i.e., the intersection of a line segment with) a 1-dimensional lattice $L$. We first show that $\cP^{(d,d)}_{I_1}$ is $N_1$-hole-free for some $N_1$. Let $i\geq 1$ be the smallest integer such that $L$ is not perpendicular to $\vec{e}_i$ and $j\geq 1$ the smallest integer such that $V_1+\RR\vec{e}_1+\cdots+\RR\vec{e}_j$ is of dimension $d$. 
By property 4 of the construction, $\cP^{(i,j)}_{I_1}$ is affine isomorphic to a $(d,1)$-Horton set $H$ with respect to some subsequence $q_2,\ldots,q_d$ of $p_1,p_2,\ldots,p_d$. 
Note that $\pi_1(H)$ is a consecutive subset of a sublattice of $\ZZ$, so, by modifying the level map of $H$ if necessary, we may assume that $\phi_H(H)$ is a set of consecutive integers. Since any consecutive subset of the 1-dimensional lattice $\ZZ\subset\RR$ is $(m+2,m)$-well-spread in itself for any $m$, $\pi_{[1]}(H)$ is $(q_2\cdots q_d+2,q_2\cdots q_d)$-well-spread in itself, so, by Lemma~\ref{lem:goup}, $H$ is $(2^{d-1}(q_2\cdots q_d+2),1)$-well-spread in itself. Therefore, setting $N_1=2^{d-1}(p_2\cdots p_d+2)$, we have that the interior of the convex hull of any $N_1$ points of $\cP^{(i,j)}_{I_1}$ contains a point of $\cP^{(i,j)}_{I_1}$. Since $\cP^{(i,j)}_{I_1}\to \cP^{(d,d)}_{I_1}$ is negligible, $\cP^{(d,d)}_{I_1}$ is $N_1$-hole-free.

Suppose we have found $N_1,\ldots,N_{k-1}$ such that, for $j=1,\ldots,k-1$, any $j$-dimensional affine subspace $V_j\subset\RR^d$ for which $I_j=I\cap V_j$ is part of a $j$-dimensional lattice has the property that the set $\cP^{(d,d)}_{I_j}$ is $N_j$-hole-free. By Lemma~\ref{lem:well-spread}, if $N = N_{k-1} (p_{k+1}\cdots p_d+1)^k k^{2k}$, then any subset of $\ZZ^k$ either has at least $N_{k-1}$ points on a hyperplane of $\RR^k$ or is $(N,q_{k+1}\cdots q_d)$-well-spread in $\ZZ^k$ for any subsequence $q_{k+1},\ldots, q_d$ of $p_1,\ldots,p_d$. Consider a $k$-dimensional affine subspace $V_k\subset\RR^d$ such that $I_k=I\cap V_k$ is part of a $k$-dimensional lattice. Let $i\geq 1$ be the smallest integer such that $V_k$ is not perpendicular to $\vec{e}_i$ and $j\geq 1$ the smallest integer such that $V_k+\RR\vec{e}_1+\cdots+\RR\vec{e}_j$ is of dimension $d$. By property 4 of the construction, there is some $\vec{v}\in\RR^d$ such that $H=\varphi_{V_k}(\cP^{(i,j)}_{I_k}-\vec{v})$ is $(d,k)$-Horton with respect to some subsequence $q_{k+1},\ldots,q_d$ of $p_1,\ldots,p_d$. 

We will find some $N_k>N_{k-1}$ such that $\cP^{(d,d)}_{I_k}$ is $N_k$-hole-free. View $V_k\cap \ZZ^d$ as a levelled set with level map of the form $\phi=a\pi_i+b$ such that $\phi(V_k\cap \ZZ^d)=\ZZ$. Note that $\phi$ may be seen as the projection onto a basis element of some $\ZZ$-basis of $V_k\cap \ZZ^d$. Suppose $S\subset I_k$. If $S$ contains $N_{k-1}$ points on a hyperplane $V_{k-1}\subset V_k$, then, for $I_{k-1} = I \cap V_{k-1}$, the interior of the convex hull of $\cP^{(d,d)}_{S}$ contains a point of $\cP^{(d,d)}_{I_{k-1}}$ by our induction hypothesis. Otherwise, by the application of Lemma~\ref{lem:well-spread} discussed above, $S$ is $(N,q_{k+1}\cdots q_d)$-well-spread in $V_k\cap \ZZ^d$ (in the ambient space $V_k\cong \RR^k$) 
for any subsequence $q_{k+1},\ldots, q_d$ of $p_1,\ldots,p_d$. By intersecting with the convex hull of $I_k$, $S$ is $(N,q_{k+1}\cdots q_d)$-well-spread in $I_k$. Applying $\varphi_{V_k}$, we have that  $\varphi_{V_k}(S)=\pi_{[k]}(\varphi_{V_k}(\cP^{(i,0)}_S-\vec{v}))$ is $(N,q_{k+1}\cdots q_d)$-well-spread in $\varphi_{V_k}(I_k)=\pi_{[k]}(\varphi_{V_k}(\cP^{(i,0)}_{I_k}-\vec{v}))$. Let $K=\varphi_{V_k}(\cP^{(i,j)}_S-\vec{v})\subset H$. 
By property 3 of the construction, $\pi_{[k]}(\varphi_{V_k}(\cP^{(i,0)}_{I_k}))\to \pi_{[k]}(\varphi_{V_k}(\cP^{(i,j)}_{I_k}))$ is negligible. Therefore, since negligible maps preserve well-spreadness, 
$\pi_{[k]}(K)$ is $(N,q_{k+1}\cdots q_d)$-well-spread in $\pi_{[k]}(H)$. 
Hence, by Lemma~\ref{lem:goup}, $K$ is $(2^{d-k}N,1)$-well-spread in $H$. Setting $N_k=2^{d-k}N$, we see that any subset of $K$ of size $N_k$ contains a point of $H$ in the interior of its convex hull and, hence, that $H$ is $N_k$-hole-free. This implies that $\cP^{(i,j)}_{I_k}$ and, hence, $\cP^{(d,d)}_{I_k}$ is also $N_k$-hole-free.

Following the induction to $k = d$, we see that $\cP^{(d,d)}_{I}=\cP^{(d,d)}$ is $N_d$-hole-free, so taking $C_d'=N_d$ suffices to complete the proof.
\end{proof}

To complete the proof of Theorem~\ref{thm:lattice}, we note that $N_1=2^{d-1}(p_2\cdots p_d+2) = d^{d + o(d)}$ and $N_k = 2^{d-k} N_{k-1} (p_{k+1}\cdots p_d+1)^k k^{2k} \leq N_{k-1} d^{kd + o(kd)}$. Therefore, 
\[C'_d = N_d \leq \prod_{k=1}^d d^{kd + o(kd)} \leq d^{d^3/2 + o(d^3)},\]
as required. A slightly more careful analysis improves this to $d^{d^3/6 + o(d^3)}$, but we suspect that even this is quite far from the true bound.

\section{Concluding remarks}

If $\cP$ is a finite set of points in $\mathbb{R}^d$, define the \emph{spread} $q_d(\cP)$ of $\cP$ to be the  maximum distance between any two points of $\cP$ divided by the minimum distance between any two points of $\cP$. A simple volume argument shows that if $\cP$ has $k$ points, then $q_d(\cP) \geq \gamma_d k^{1/d}$ for some $\gamma_d > 0$ depending only on $d$, while the lattice $[n]^d$ shows that this bound is tight up to the constant. Answering a question of Alon, Katchalski and Pulleyblank~\cite{AKP89} inspired by the fact that Horton's original construction of $7$-hole-free sets has very large spread, Valtr~\cite{V92-2} showed that there is a constant $\alpha$ and, for every natural number $k$, a set $\cP$ of $k$ points in the plane such that $q_2(\cP) \leq \alpha \sqrt{k}$ but $\cP$ is still $7$-hole-free. In fact, this is a simple corollary of his result showing that there are $7$-hole-free sets which are $\varepsilon$-perturbations of the set of lattice points $[n]^2$. Similarly, our Theorem~\ref{thm:lattice} easily implies the following result.

\begin{cor}
For any integer $d \geq 2$, there exist  constants $\alpha_d$ and $C'_d = d^{O(d^3)}$ such that, for every natural number $k$, there is a set $\cP$ of $k$ points in $\mathbb{R}^d$ such that $q_d(\cP) \leq \alpha_d k^{1/d}$ and $\cP$ is $C'_d$-hole-free. In particular, when $d = 2$, one may take $C_2' = 7$.
\end{cor}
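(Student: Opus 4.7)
The plan is to derive the corollary directly from Theorem~\ref{thm:lattice}. Given $k$, set $n = \lceil k^{1/d}\rceil$, so that $n^d \geq k$ and $n \leq k^{1/d}+1 \leq 2k^{1/d}$, and apply Theorem~\ref{thm:lattice} with this $n$ and any fixed $\varepsilon < 1/2$, say $\varepsilon = 1/4$. This produces a $C'_d$-hole-free set $\cP_0 = \setcond{P_{\vec{x}}}{\vec{x}\in [n]^d}$ with $\norm{P_{\vec{x}}-\vec{x}} < 1/4$ for all $\vec{x}$. The spread of $\cP_0$ is easy to bound: its diameter is at most $\sqrt{d}(n-1)+2\varepsilon \leq 2\sqrt{d}\,n$, while its minimum pairwise distance is at least $1-2\varepsilon = 1/2$, so $q_d(\cP_0) \leq 4\sqrt{d}\,n \leq 8\sqrt{d}\,k^{1/d}$, and one may take $\alpha_d = 8\sqrt{d}$.

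To reduce to exactly $k$ points, I would trim $\cP_0$ by iteratively deleting a single vertex of the convex hull of the current set. Note that $n^d - k \leq n^d-(n-1)^d \leq d\,n^{d-1}$, so at most this many steps are needed. The crucial observation is that \emph{deleting a hull vertex preserves $C'_d$-hole-freeness}: if $p$ is a vertex of $\conv \cQ$ for the current set $\cQ$, then $p \notin \conv(\cQ \setminus \set{p})$ and in particular $p \notin \Int \conv S$ for any $S \subset \cQ \setminus \set{p}$. Hence the filler in $\cQ$ guaranteed by the $C'_d$-hole-freeness of $\cQ$ for any $C'_d$-subset $S \subset \cQ \setminus \set{p}$ cannot be $p$ itself, so this filler survives in $\cQ \setminus \set{p}$, which is therefore still $C'_d$-hole-free. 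Since any nonempty finite set admits a hull vertex, we can always select a next point to peel, and after $n^d - k$ such peels we arrive at a $C'_d$-hole-free set $\cP$ of exactly $k$ points. The spread only decreases under passage to a subset of a bounded set with the same minimum pairwise distance, so $q_d(\cP) \leq q_d(\cP_0) \leq 8\sqrt{d}\,k^{1/d}$ still holds.

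The only real subtlety lies in checking that the hull-vertex deletion argument truly carries through for the perturbed set rather than just the unperturbed lattice; this is automatic because the statement ``$p \notin \conv(\cQ \setminus \set{p})$'' is an intrinsic property of $\cQ$ as a finite subset of $\RR^d$ and does not depend on how $\cQ$ was obtained. Taking $C'_d = d^{O(d^3)}$ from Theorem~\ref{thm:lattice} (and $C'_2 = 7$ when $d = 2$) then completes the proof.
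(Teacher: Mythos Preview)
Your argument is correct and supplies exactly the details the paper omits: the paper does not give a proof of this corollary, remarking only that Theorem~\ref{thm:lattice} ``easily implies'' it. Your derivation from Theorem~\ref{thm:lattice} with $n=\lceil k^{1/d}\rceil$ and $\varepsilon<1/2$ gives the right spread bound, and the hull-vertex peeling step to land on exactly $k$ points is a clean and valid way to finish; the key observation that an extreme point of $\cQ$ cannot lie in $\Int\conv S$ for any $S\subset\cQ\setminus\{p\}$ is exactly what is needed to show $C_d'$-hole-freeness is preserved.
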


In~\cite{V92-2}, Valtr also 
addressed another question first studied by Alon, Katchalski and Pulleyblank~\cite{AKP89}, namely, given a set of $k$ points $\cP$ in general position in $\RR^2$ with $q_2(\cP) \leq \alpha \sqrt{k}$ for some constant $\alpha$, how large of a convex subset must $\cP$ contain? If we write $c_\alpha(k)$  for the size of the largest such subset, Valtr proved that $c_{\alpha}(k) \geq \beta k^{1/3}$ for some $\beta > 0$ depending only on $\alpha$ and also that this is best possible up to the constant. His construction showing that this is best possible is again just his construction of an $\varepsilon$-perturbation of the set of lattice points $[n]^2$ with no large holes, though the analysis requires significant additional work. It would be interesting to decide if our construction also impinges on the analogous problem in higher dimensions.

\vspace{3mm}
\noindent
{\bf Acknowledgements.} We are indebted to the anonymous reviewer for several insightful remarks.

\end{document}